 \newtheorem{thm}{Theorem}[section]
 \newtheorem{cor}[thm]{Corollary}
 \newtheorem{lem}[thm]{Lemma}
 \newtheorem{prop}[thm]{Proposition}
  \theoremstyle{definition}
 \newtheorem{defn}[thm]{Definition}
    \newtheorem{rmk}[thm]{Remark}
  \newtheorem{defn-thm}[thm]{Definition-Theorem}
 \theoremstyle{remark}
 \newtheorem{ex}[thm]{Example}
\numberwithin{equation}{section}
\numberwithin{thm}{section}
\numberwithin{table}{section}
\numberwithin{figure}{section}
\newcommand{\ZZ}{\mathbb{Z}}
\newcommand{\CC}{\mathbb{C}}
\newcommand{\VV}{\mathcal{V}}
\newcommand{\Ind}{\text{Ind}}
\newcommand{\Res}{\text{Res}}
\newcommand{\vac}{\left| 0 \right>}
\newcommand{\ber}{\begin{red}}
\newcommand{\er}{\end{red}}
\begin{document}

\title{ Categorification of  Virasoro-Magri \\ Poisson vertex algebra}

\author{ Seok-Jin Kang$^{1}$}
\address{Department of Mathematical Sciences and Research Institute of Mathematics, Seoul National University, 599 Gwanak-Ro, Seoul 151-747, Korea}
\email{sjkang@math.snu.ac.kr}

\author{ Uhi Rinn Suh$^{2}$}
\address{Research Institute of Mathematics, Seoul National University, 599 Gwanak-Ro, Gwanak-Gu, Seoul 151-747, Korea}
\email{uhrisu1@math.snu.ac.kr}

\thanks{$^{1}$This work was supported by NRF Grant \# 2014-021261 and NRF Grant \# 2010-0010753.}

\thanks{$^{2}$This work was supported by NRF Grant \# 2014-021261.}

%\date{}
\maketitle

\pagestyle{plain}

\begin{abstract}
Let $\Sigma$ be the direct sum of algebra of symmetric groups $\CC\Sigma_n$, $n\in \ZZ_{\geq
0}$. We show that the Grothendieck group $K_0(\Sigma)$ of the category of
finite dimensional modules of $\Sigma$ is isomorphic to the differential
algebra of polynomials $\ZZ[\partial^n x\, |\, n\in \ZZ_\geq 0].$
Moreover, we define $m$-th products ($m\in \ZZ_{\geq 0}$) on
$ K_0(\Sigma)$ which make the algebra $ K_0(\Sigma)$ 
isomorphic to an integral form of the Virasoro-Magri  Poisson vertex algebra. Also, we
investigate relations between $K_0(\Sigma)$ and $K_0(N)$ where $K_0(N)$
is the direct sum of Grothendieck groups $K_0(N_n)$, $n\geq 0$, of finitely generated projective
$N_n$-modules. Here $N_n$ is the nil-Coxeter algebra generated by $n-1$ elements.

\end{abstract}

% \tableofcontents

%\setcounter{tocdepth} {-1}

\section*{Introduction}

A {\it Poisson vertex algebra (PVA)} arises in mathematical physics as the underlying algebraic structure of the classical field theory. Also it is connected to other algebraic structures in mathematical physics. For example, it is a quasi-classical limit of a family of vertex algebras, which can be seen as the algebraic structure appearing in the 2-dimensional conformal field theory. Moreover, PVAs with Hamiltonian operators  are chiralizations of Poisson algebras which are related to the classical mechanics. Here the chiralization implies a Poisson algebra of finite algebraic dimension is connected to a PVA of  infinite algebraic dimension.
(See \cite{K}.)

In \cite{Z}, Zhu constructed the maps called the Zhu maps which relates an associative algebra $Zhu_H(V)$ to a vertex algebra $V$ with a Hamiltonian operator $H$. He proved that there is a one-to-one correspondence between irreducible positive energy modules over a vertex algebra $V$ and irreducible modules over the Zhu algebra $Zhu_H(V)$. Analogously, De Sole and Kac \cite{DK} took the quasi-classical limit of the Zhu map and obtained the  (classical) Zhu map which associates a Poisson algebra  $Zhu_H(\VV)$ to a PVA $\VV$ with a Hamiltonian operator $H$. Hence, from the point of view of representation theory, the Zhu map is a reasonable finitization map from a PVA to  a Poisson algebra. Thus we consider a PVA as a chiralization of its Zhu algebra. The simplest example of Zhu algebras is  $Zhu_H(\VV)=\CC[x]$, where $\VV$ is the Virasoro-Magri PVA and $H=L_0$ for an energy momentum field $L \in \VV.$

On the other hand, Khovanov \cite{Kho} showed that the direct sum of Grothendieck groups of finitely generated projective modules of the nil-Coxeter algebras is isomorphic to the polynomial algebra $\ZZ[x].$ More precisely, let $N_n$, $n\in \ZZ_{\geq 0}$, be the nil-Coxeter algebra generated by $n-1$ elements and let $K_0(N_n)$ be the Grothendieck group of the category $N_n$-pmod of finitely generated projective modules over $N_n$.   Then $K_0(N)=\bigoplus_{n\geq 0} K_0(N_n)$ is isomorphic to $\ZZ[x]$. Moreover, he showed that the induction and restriction functors on the direct sum $\bigoplus_{n\geq 0} N_n$-mod of categories of finitely generated left $N_n$-modules  categorify the polynomial representation of the Weyl group. (See Section \ref{Sec:SGNA}.)

Our natural question is how to categorify Virasoro-Magri PVA $\VV$, a chiralization of the polynomial algebra. Let $\Sigma_n$ be the symmetric group on $n$ letters and set $\Sigma=\bigoplus_{n\geq 0} \CC \Sigma_n$. We denote by $K_0(\Sigma)$ the Grothendieck group of the category of finitely generated projective modules over $\Sigma$. The main result of this paper shows that $K_0(\Sigma)$ is isomorphic to $\VV_\ZZ$ as PVA, where $\VV_\ZZ$ is the integral form of $\VV$ defined in Remark \ref{VM_Z} (2) (Theorem \ref{Thm:3.8_1106}).

Recall that the nil-Coxeter algebra is a degenerate homogenous version of the symmetric group algebra. Another main result of this paper is difference of these two algebras becomes manifest through the Zhu map between Virasoro-Magri PVA and the polynomial algebra $\CC[x].$

Our paper is organized as follows. In Section \ref{Sec:PVA}, we review the notion of vertex algebras and the state-field correspondence. In Section \ref{Sec:PVA_1}, we give an explicit description of a PVA as the quasi-classical limit of a family of vertex algebras. We also explain the Zhu maps and Poisson algebra structures on Zhu algebras. In Section \ref{Sec:SGNA}, we review the representation theory of symmetric groups and nil-Coxeter algebras. Our main results are included in Section \ref{Sec:VMPVA} and Section \ref{Sec:app}. In Section \ref{Sec:VMPVA}, we prove that the category of finitely generated projective modules over $\Sigma$ provides a categorification of Virasoro-Magri PVA. In Section \ref{Sec:app}, we describe the relation between $K_0(N)$ and $K_0(\Sigma)$. The PVA structure on $K_0(\Sigma)_\CC$ induces a Poisson algebra structure and a differential algebra structure on $K_0(N)_\CC$, and the differential algebra structure on $K_0(\Sigma)$ induces that of $K_0(N)$. Moreover, using the state-field correspondence, we give a realization of the quantization of $K_0(\Sigma)$ in terms of linear operators on $K_0(N)$.

\vskip 5mm

\section{Vertex algebras} \label{Sec:PVA}

A main ingredient of this paper is the notion of  Poisson vertex algebras, which can be understood as  quasi-classical limits of family of vertex algebras.

\begin{defn} \cite{K}  Let $V$ be a vector space over  $\CC$. 
\begin{enumerate}[(i)]
\item A {\it quantum field} on $V$ is
\[\Phi(z)= \sum_{n\in \ZZ} \Phi_{(n)} z^{-n-1},\]
where $\Phi_{(n)} \in \text{End} V$ and $\Phi_{(n)} v=0$ for all but finitely many $n>0$.

\item Let $\vac$ be an element in $V$,  $\partial$ be an endomorphism of $V$
and  $\mathcal{F}$ be a set of quantum fields.
A quadruple $(V, \vac, \partial, \mathcal{F})$  is a {\it pre-vertex algebra} if it satisfies:
\begin{enumerate}
\item (vaccum) $\partial \vac =0;$
\item (translational covariance) $[\partial, \Phi(z)]= \frac{d}{dz} \Phi(z);$
\item (completeness) $\Phi_{(n_s)}^{\alpha_s} \cdots \Phi_{(n_1)}^{\alpha_1}  \vac$ spans $V$, where $\Phi^{\alpha_i}\in \mathcal{F}$, $n_i\in \ZZ$, $i=1, \cdots, s;$
\item (locality) for any $\Phi^\alpha, \Phi^\beta \in \mathcal{F}$,
there exists a non-negative integer $N_{\alpha, \beta}$ such that
$(z-w)^{N_{\alpha, \beta}} [\Phi^\alpha(z), \Phi^\beta(w)]=0$.
\end{enumerate}
\end{enumerate}
\end{defn}

For each pre-vertex algebra $(V, \vac, \partial, \mathcal{F})$, let
$\widetilde{\mathcal{F}}$ be the set of all translational covariant
quantum fields on $V$ and let $\overline{\mathcal{F}}$ be
the maximal subset of $\widetilde{\mathcal{F}}$
containing $\mathcal{F}$ such that $\overline{\mathcal{F}}$
satisfies the locality condition.

\begin{prop} \cite{K} \hfill

{\rm (a)}  There is a bijective linear map
\[s: V \to \overline{\mathcal{F}}, \qquad a \mapsto a(z),\]
 whose inverse is given by $$a= a(z)\vac|_{z=0} =
a_{(-1)} \vac.$$

{\rm (b)} We have
\begin{equation}\label{eq:correspondence}
\begin{aligned}
&  s(\vac) = \text{id}_{V}, \ \ s(\partial^j a)= \partial_{z}^{j}
a(z), \ \ \text{where}
\ \partial_{z} = \frac{d}{dz}, \\
& \partial^j \, a=\partial_z^j \, a(z)\vac |_{z=0} = 
j! \,  a_{(-j-1)} \vac \ \text{ for } \ j\in \ZZ_{\geq 0}.
\end{aligned}
\end{equation}
\end{prop}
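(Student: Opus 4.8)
The plan is to realize the inverse of $s$ as the explicit \emph{state map} $t\colon \overline{\mathcal{F}}\to V$, $\phi\mapsto \phi_{(-1)}\vac$, and to prove that $t$ is a linear bijection; one then \emph{defines} $s:=t^{-1}$. The first step is to extract the content of the vacuum and translational covariance axioms at the level of modes. Rewriting $[\partial,\Phi(z)]=\frac{d}{dz}\Phi(z)$ gives $[\partial,\Phi_{(n)}]=-n\,\Phi_{(n-1)}$, and together with $\partial\vac=0$ this yields the recursion $\partial\,\Phi_{(n)}\vac=-n\,\Phi_{(n-1)}\vac$ for every translational covariant field $\Phi$. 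Since $\Phi$ is a quantum field, $\Phi_{(n)}\vac=0$ for $n\gg 0$, so a downward induction using the recursion forces $\Phi_{(n)}\vac=0$ for all $n\geq 0$ and $\Phi_{(-j-1)}\vac=\tfrac{1}{j!}\partial^{j}(\Phi_{(-1)}\vac)$. Hence $\Phi(z)\vac=\sum_{j\geq 0}\tfrac{z^{j}}{j!}\partial^{j}(\Phi_{(-1)}\vac)=e^{z\partial}\Phi_{(-1)}\vac$, which at once produces the inverse formula $a=a(z)\vac|_{z=0}=a_{(-1)}\vac$ and the identities $\partial^{j}a=\partial_z^{j}a(z)\vac|_{z=0}=j!\,a_{(-j-1)}\vac$ of part~(b).

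Next I would prove injectivity of $t$, the key uniqueness statement: if $\Phi\in\overline{\mathcal{F}}$ satisfies $\Phi_{(-1)}\vac=0$, then $\Phi=0$. The expansion above shows $\Phi_{(-1)}\vac=0$ already forces $\Phi(z)\vac=0$. Since $\Phi$ is local with each generating field $\Phi^{\alpha}$, we have $(z-w)^{N}[\Phi(z),\Phi^{\alpha}(w)]=0$; applying this to $\vac$, using $\Phi(z)\vac=0$ and the regularity in $w$ of $\Phi^{\alpha}(w)\vac$, and extracting the $w^{0}$-coefficient gives $\Phi(z)\,\Phi^{\alpha}_{(-1)}\vac=0$. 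To reach all of $V$ I would invoke Dong's lemma to guarantee that $\Phi$ stays local with every iterated normally ordered product of the $\Phi^{\alpha}$, and then induct on the length $s$ of the spanning monomials $\Phi^{\alpha_s}_{(n_s)}\cdots\Phi^{\alpha_1}_{(n_1)}\vac$ furnished by completeness, concluding $\Phi(z)v=0$ for all $v\in V$, i.e. $\Phi=0$.

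For surjectivity I would establish the existence half: to each $a\in V$ I must attach a field $a(z)\in\overline{\mathcal{F}}$ with $a_{(-1)}\vac=a$. Writing $a$ as a combination of monomials $\Phi^{\alpha_s}_{(n_s)}\cdots\Phi^{\alpha_1}_{(n_1)}\vac$, I assign to such a monomial the field built from the corresponding iterated $n$-th (normally ordered) products of the $\Phi^{\alpha_i}(z)$; Dong's lemma keeps these mutually local and translational covariant, hence in $\overline{\mathcal{F}}$ by maximality, while a direct computation of their action on $\vac$ recovers the monomial. Linearity of $t$ and completeness then show $t$ is onto, so $s=t^{-1}$ is the desired linear bijection. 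The remaining formulas of part~(b) follow from injectivity of $t$: the constant field $\mathrm{id}_V$ and the derivative $\partial_z^{j}a(z)$ both lie in $\overline{\mathcal{F}}$ (being translational covariant and local with all of $\overline{\mathcal{F}}$), and a short mode computation gives $t(\mathrm{id}_V)=\vac$ and $t(\partial_z^{j}a(z))=\partial^{j}a$, whence $s(\vac)=\mathrm{id}_V$ and $s(\partial^{j}a)=\partial_z^{j}a(z)$.

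I expect the crux to be the propagation of locality through products, i.e. Dong's lemma, since it is exactly what makes both the uniqueness induction and the existence construction go through once the base cases are in hand. By comparison, the mode bookkeeping of the first step and the verification of part~(b) are routine.
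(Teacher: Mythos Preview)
The paper does not supply a proof of this proposition; it is stated with a bare citation to~\cite{K} and then used. Your outline is essentially the standard argument from that reference: deduce $\Phi(z)\vac=e^{z\partial}\Phi_{(-1)}\vac$ from translational covariance, obtain uniqueness (injectivity of the state map) from locality plus completeness, obtain existence (surjectivity) via Dong's lemma and iterated normally ordered products, and read off the identities in~(b). This is correct, and there is nothing in the paper to compare against beyond the citation.

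One small technical remark on your uniqueness paragraph: the step ``extracting the $w^{0}$-coefficient'' deserves a word of justification. From $(z-w)^{N}\Phi(z)\Phi^{\alpha}(w)\vac=0$ you want $\Phi(z)\Phi^{\alpha}(w)\vac=0$ before specializing $w$. This follows because $\Phi^{\alpha}(w)\vac\in V[[w]]$, so $\Phi(z)\Phi^{\alpha}(w)\vac\in V((z))[[w]]$, and in that ring $(z-w)^{N}$ is not a zero divisor (indeed it is invertible as a power series in $w$). With that in hand, the inductive passage to arbitrary monomials $\Phi^{\alpha_s}_{(n_s)}\cdots\Phi^{\alpha_1}_{(n_1)}\vac$ needs Dong's lemma only to ensure $\Phi$ remains local with the fields produced along the way; you identified this correctly as the crux.
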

The map $s$ is called the {\it state-field correspondence}.

Since the set of quantum fields is not closed under multiplication,
we use  the {\it normally ordered product }  $: \ \ :$. The normally
ordered product of two quantum fields $a(z)$ and $b(z)$ in
$\overline{\mathcal{F}}$ is defined by
\[(a(z), b(z)) \mapsto : a(z) b(z): \ \ =\  a(z)_+b(z) +b(z)a(z)_-,\]
where $$a(z)_+= \sum_{n<0} a_{(n)} z^{-n-1} \ \ \text{and} \ \
a(z)_-= \sum_{n\geq0} a_{(n)} z^{-n-1}.$$ Then $:a(z)b(z):$ is again
a quantum field in $\overline{\mathcal{F}}.$  The corresponding
state of the normally ordered product between $a(z)$ and $b(z)$ is
\[ s^{-1}(:a(z) b(z):)= a_{(-1)}b\]
and we denote  $a_{(-1)} b$ by $:a b:$. 

\begin{defn} \cite{K}
A {\it vertex algebra} is a quadruple $(V, \vac, \partial, \overline{\mathcal{F}})$, where  $(V, \vac, \partial, \mathcal{F})$  is a pre-vertex algebra and $\overline{\mathcal{F}}$ is obtained from $\mathcal{F}$ by the state-field correspondence $s:\mathcal{V} \to \overline{\mathcal{F}}$.
\end{defn}

One of the most important examples of a vertex algebra arises from
Virasoro algebra. Recall that Virasoro algebra is the Lie algebra $
Vir = \bigoplus_{n\in \ZZ} \CC L_n \oplus \CC C $ endowed with the
bracket
\[ [L_m, L_n] = (m-n) L_{m+n} +\delta_{m+n, 0} \frac{1}{12}(m^3-m) C, \quad [Vir, C]=0.\]

\begin{rmk} \label{Vir}
In $Vir/\CC C$,  we may identify $L_m$ with the operator $x^{-m+1}
\frac{d}{dx}: \CC[x] \to\CC[x].$ Indeed, we have $$[x^{-m+1}
\frac{d}{dx}, x^{-n+1} \frac{d}{dx}]= (m-n) x^{-m-n+1} \frac{d}{dx}.$$
\end{rmk}

Let $U=U(Vir)$ be the universal enveloping algebra of $Vir$ and let
$I$ be the left ideal of $U$ generated by $(Vir)_{\ge -1} :=
\bigoplus_{j \ge -1} \CC L_{j}$. Set $V=U/I$. Using the Leibniz
rule, we can define a linear operator $\partial$ on $V$ by
$$L_{-n} \mapsto (n-1)L_{-n-1} \quad (n \ge 2).$$
Take ${\mathcal F}=\{C, \ L(z) = \sum_{n \ge 2} L_{-n} z^{n-2} \}$
to be the set of quantum fields on $V$, where the commutation
relations on $\mathcal{F}$ are induced by those of $Vir$. Then the $(V,
1, \partial, \mathcal{F})$ is a pre-vertex algebra. For any $c \in
\CC$, $V_{c}:=V\big/(C-c)V$ has a vertex algebra structure induced
from $V$. By the completeness, $V_{c}$ is spanned by
\[ L_{-j_1-2}\cdots L_{j_s-2} \vac, \ \ \text{where} \ s\ge 0, \ j_1 \geq \cdots \geq j_s \geq 0.\]
The vertex algebra $V_{c}$ is called the {\it universal Virasoro
vertex algebra with central charge $c$} \cite{K}.

\vskip 2mm

%let $\overline{\mathcal{F}}\supset \mathcal{F}$ be the maximal
%subset of $\widetilde{\mathcal{F}}$ with the locality.  If we
%substitute $\mathcal{F}$ by $\overline{\mathcal{F}}$ then we get the
%equivalent vertex algebra $(V, \vac, \partial,
%\overline{\mathcal{F}})$. The extended set of quantum fields
%$\overline{\mathcal{F}}$ satisfies the following proposition.

\vskip 2mm

%\begin{ex} \label{Ex:1.6_1107}
For the universal Virasoro vertex algebra, %defined in Example \ref{Ex:1.2_1109},
the state-field correspondence is
\[ s(L_{j_1-2} \cdots L_{-j_s-2}\vac )= : \partial^{(j_1)}_z L(z) \cdots \partial^{(j_s)}_z L(z):
\ \ \text{where} \ \ \partial^{(j)} = \frac{1}{j!} \partial^j \]
and
\[: a^1(z) a^2(z)\cdots a^s(z):  =
(:a^1(z)(:a^2(z): \cdots (: a^{s-1}(z) a^s(z):)\cdots:):) \] for
$a^i \in V$, $1\le i \le s.$

%\end{ex}

\vskip 3mm

We now introduce an alternative definition of vertex algebras which is 
equivalent to the previous one. We first recall the definition of
differential algebras and Lie conformal algebras \cite{K}.

%Now we want to show another definition of a vertex algebra which is useful to derive a Poisson vertex algebra. In order to do this, recall differential algebras and Lie conformal algebras.

\begin{defn} \hfill
\begin{enumerate}[(i)]
\item A unital associative algebra $A$ is called a {\it differential algebra}
if it has a linear operator $\partial:A\to A$ such that $\partial
(a\cdot b)= \partial a\cdot b+ a \cdot \partial b.$
\item A {\it Lie conformal algebra}  $(R, [ \, _\lambda\, ] ) $ is a $\CC[\partial]$-module
endowed with a linear map
\[  [ \, _\lambda\, ]: R \otimes R \to R[\lambda] \]
for each formal variable $\lambda$, called the $\lambda$-bracket,
satisfying the following properties:
\begin{enumerate}
\item (sesqui-linearity) $ [\partial a_\lambda b]=-\lambda[a_\lambda b], \quad [a_\lambda \partial b]=(\lambda+\partial)[a_\lambda b],$

\item (skew-symmetry) $[b_\lambda a]= -[a_{-\lambda-\partial} b],$

\item (Jacobi identity) $[a_\lambda[b_\mu c]]= [[a_\lambda b]_{\lambda+\mu}c] +[b_\mu[a_\lambda c]].$
\end{enumerate}

\end{enumerate}
\end{defn}

Write $$[a_\lambda b]= \sum_{n\in \ZZ_{\geq 0}} \frac{\lambda^n}{n!}
c_{n} \in R[\lambda]$$ and define $a_{(n)} b = c_{n}$ for $n \in
\ZZ_{\ge 0}$. So a Lie conformal algebra is endowed with infinitely
many products $a_{(n)} b$, the $n$-th product of $a$ and $b$,
indexed by $\ZZ_{\ge 0}$.

On a vertex algebra $V$, we consider the $\lambda$-bracket $[\, _\lambda \, ]$ and the normally ordered product $: \, \, :$ such that
\[ [a_\lambda b]= \Res_{z}e^{\lambda z} a(z) b = \sum_{n\geq 0} \frac{\lambda^n}{n!} (a_{(n)}b)\]
and $: ab:=a_{(-1)}b.$  Since we have
\[ :\frac{(\partial^j a)  b}{j!} := a_{(-j-1)} b,\]
the $\lambda$-bracket determines all non-negative $n$-th products on
$V$ and the normally ordered product determines all  negative $n$-th
products on $V$. In this way, we get an alternative definition of a
vertex algebra via $\lambda$-brackets and normally ordered products.

\begin{defn} \cite{BK}
The quintuple $(V, \vac, \partial, [\, _\lambda \, ], : \, \, :)$ is a {\it vertex algebra} if 
\begin{enumerate}
\item $(V, \partial,  [\, _\lambda \, ])$ is a Lie conformal algebra,
\item $(V, \vac, \partial, : \, \, :)$ is a differential algebra with strong

quasi-commutativity; i.e.,
\[ :a\,(:bc:):-:b\,(:ac:):= \sum_{n\geq 0} : \left(\frac{(-\partial)^{n+1}}{(n+1)!} a_{(n)}b \right) \, c: \ \ \text{for} \ a,b,c \in V,\]

\item the normally ordered product and the $\lambda$-bracket are related by non-commutative Wick
formula; i.e.,
\[ [a_\lambda (:bc:)]= :[a_\lambda b] c:+:b[a_\lambda c]: + \int_0^\lambda [[a_\lambda b]_\mu c]] d\mu\ \text{ for } a,b,c \in V,\]
where $\int_0^\lambda A\mu^n d\mu= \frac{1}{n+1} \lambda^{n+1} A$ \,
for $A \in V[\lambda].$
\end{enumerate}
\end{defn}

\vskip 2mm

\begin{ex} \hfill
\begin{enumerate}
\item
Let $R=\CC[\partial]\otimes L \oplus \CC C$ be a
$\CC[\partial]$-module such that $\partial C=0$ and define
\[ [L_\lambda L]=(\partial +2\lambda) L + C \lambda^3, \quad [C_\lambda R]=0.\]
Then it can be extended to a $\lambda$-bracket on $R$ by
sesqui-linearity and we obtain a Lie conformal algebra structure on
$R$, the {\it Virasoro Lie conformal algebra}.

\item Let $V$  be the vector space spanned by
$$:\partial^{(j_1)}L \cdots \partial^{(j_s)}L:  \ \ \text{for} \ s \ge 0, \,  j_1, \cdots,j_s
\in\ZZ_{\geq 0}$$ and define the $\lambda$-bracket by
$$[L_{\lambda} L]= (\partial+2\lambda)L+ c \lambda^3 \ \ \text{for some} \ \, c\in
\CC.$$ Then $V$ is the universal Virasoro vertex algebra with
central charge $c$.
\end{enumerate}

\end{ex}

\vskip 5mm

\section{Poisson vertex algebras} \label{Sec:PVA_1}

Let $(V_\epsilon, \vac_\epsilon,
\partial_\epsilon,  [\, _\lambda \, ]_\epsilon, : \, \, :_\epsilon)$
be a family of vertex algebras such that (i) $V_\epsilon$ is a free
$\CC[\epsilon]$-module, \,(ii) $[V_\epsilon, V_\epsilon] \subset
\epsilon V_\epsilon[\lambda].$ We review several notions introduced in \cite{K}.

\begin{defn} \label{def:quasi-classical limit}
The {\it quasi-classical limit} of $(V_\epsilon, \vac_\epsilon,
\partial_\epsilon,  [\, _\lambda \, ]_\epsilon, : \, \, :_\epsilon)$
is the quintuple  $(\VV, 1, \partial,  \{\, _\lambda \,\}, \, \cdot
\, )$, where
\begin{enumerate}
\item $\VV = V_{\epsilon} \big/ \epsilon V_{\epsilon}$,
\item $1 = \vac + \epsilon V_{\epsilon}$,
\item $\partial$ and $\cdot$ are induced by $\partial_{\epsilon}$
and $:\, \, :_{\epsilon}$,
\item $\{(a+\epsilon V_{\epsilon})_{\lambda} (b+ \epsilon
V_{\epsilon}) \} = [a _{\lambda} b] + \epsilon V_{\epsilon}$ for
$a,b \in V_{\epsilon}$.
\end{enumerate}
\end{defn}

\begin{defn}
The quintuple $(\VV, 1,  \partial, \{\, _\lambda \, \}, \cdot)$ is
called a {\it Poisson vertex algebra} (or {\it PVA} for brevity) if
it satisfies the following conditions:
\begin{enumerate}
\item $(\VV, \partial, 1, \cdot)$ is a commutative  differential algebra.
\item $(\VV, \partial, \{\, _\lambda \, \})$ is a Lie conformal algebra.
\item The Leibniz rule $\{a_\lambda bc\}= b\{a_\lambda c\}+ c\{a_\lambda b\}$ holds for all $a,b,c\in \VV$.
\end{enumerate}
\end{defn}

One can check that the quasi-classical limit $(\VV, 1, \partial,  \{\, _\lambda \,\}, \, \cdot
\, )$ of $(V_\epsilon, \vac_\epsilon,
\partial_\epsilon,  [\, _\lambda \, ]_\epsilon, : \, \, :_\epsilon)$ is a Poisson vertex algebra.

Let $V$ be the universal Virasoro vertex algebra with central charge
$c \in \CC$ and let $V_{\epsilon} = V[\epsilon]$ be the family of
vertex algebras endowed with the $\lambda$-bracket
$$[L_{\lambda}L]_{\epsilon} = \epsilon[L_{\lambda} L].$$
We now introduce the main subject of this paper.

\begin{defn}\label{def:Virasoro-Magri PVA}
The classical limit $\VV$ of the family of universal Virasoro vertex
algebras $V_{\epsilon}$ is called the {\it Virasoro-Magri Poisson
vertex algebra with central charge $c$}.
\end{defn}

By the definition, we have $\VV = \CC[\partial^{n} L \mid n \in
\ZZ_{\ge 0}]$ and
$$\{L_{\lambda}L \} = (\partial + 2 \lambda) L + c \lambda^3,$$
which is extended to the $\lambda$-bracket on $\VV$ by the
sesqui-linearity and the Leibniz rule.

\vskip 2mm

In general, we have the following {\it master's formula}.

\begin{prop} \cite{BDK}
Let $I$ be an index set and let $\VV=\CC[ u_i^{(n)}| n \in \ZZ_{\geq
0}, i \in I]$ be the algebra of differential polynomials where $
u_i^{(n)}=\partial^n u_i$ $(n \ge 0, i \in I)$.

If $\VV$ is a Poisson vertex algebra, then the following formula
holds:
\[ \{f_\lambda g\}= \sum_{i,j\in I,\, m,n\in \ZZ_{\geq 0}}
\frac{\partial g}{\partial u_j^{(n)}}(\lambda+ \partial)^n \{u_{i\,
\lambda+\partial } u_j\}_{\to} (-\lambda-\partial)^m \frac{\partial
f}{\partial u_i^{(m)}} \ \ \text{for} \ f,g\in \VV, \] where $\{\
_{\lambda+\partial}\ \}_\to$ means the differential $\partial$
arising from the bracket acts on the right; i.e, 
\[ \textstyle \{ a_{\lambda+\partial} b\}_{\to}C = \sum_{n\geq 0} a_{(n)}b \frac{(\lambda+\partial)^n}{n!} C\neq\sum_{n\geq 0} \frac{(\lambda+\partial)^n}{n!}  a_{(n)}b\,  C, \quad a,b,C\in \mathcal{V}. \]
\end{prop}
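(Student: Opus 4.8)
The plan is to reduce the bracket $\{f_\lambda g\}$ to brackets of the generators $u_i$ by two successive applications of the structural axioms, treating the two arguments separately.

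First I would dispose of the second argument. For a fixed $f$, the map $g \mapsto \{f_\lambda g\}$ is, by the Leibniz rule (condition (3) of the PVA definition), a derivation of $\VV$ with values in $\VV[\lambda]$; combined with sesqui-linearity $\{f_\lambda \partial b\} = (\lambda+\partial)\{f_\lambda b\}$, a straightforward induction on the polynomial degree of $g$ (base case $g = u_j^{(n)} = \partial^n u_j$) yields
\[ \{f_\lambda g\} = \sum_{j \in I,\, n \in \ZZ_{\geq 0}} \frac{\partial g}{\partial u_j^{(n)}} (\lambda+\partial)^n \{f_\lambda u_j\}, \]
so that everything is reduced to computing $\{f_\lambda u_j\}$ for a single generator $u_j$ in the second slot.

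Next I would peel off the first argument. Here the Leibniz rule is not directly available, so I would first \emph{derive} the left Leibniz rule $\{f_1 f_2{}_\lambda h\} = \{f_1{}_{\lambda+\partial}h\}_{\to}\, f_2 + \{f_2{}_{\lambda+\partial}h\}_{\to}\, f_1$ from skew-symmetry together with the right Leibniz rule; the arrow records that the $\partial$ produced by skew-symmetry acts on the factor pulled out of the first slot. Using this together with the first-slot sesqui-linearity $\{(\partial^m u_i)_\lambda u_j\} = (-\lambda)^m \{u_i{}_\lambda u_j\}$, I would prove by induction on the degree of $f$ that
\[ \{f_\lambda u_j\} = \sum_{i \in I,\, m \in \ZZ_{\geq 0}} \{u_{i\,\lambda+\partial} u_j\}_{\to}\, (-\lambda-\partial)^m \frac{\partial f}{\partial u_i^{(m)}}; \]
the shift $-\lambda-\partial$ and the direction $\to$ both arise from the places where skew-symmetry forces $\partial$ to act to the right, and the inductive step for a product $f = f_1 f_2$ is matched by the chain rule $\frac{\partial (f_1 f_2)}{\partial u_i^{(m)}} = f_1 \frac{\partial f_2}{\partial u_i^{(m)}} + f_2 \frac{\partial f_1}{\partial u_i^{(m)}}$.

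Finally I would substitute the second identity into the first. Since the operator $(\lambda+\partial)^n$ coming from the $g$-reduction acts on the entire expression to its right, the composition reproduces exactly the displayed master's formula. I expect the main obstacle to be the bookkeeping of the direction in which $\partial$ acts: at each use of sesqui-linearity and of skew-symmetry one must verify that $\partial$ lands on the intended factors (the precise meaning of the $\to$ subscript), and in the inductive step the two nested $\to$-operators must be shown to combine correctly with the chain rule. To pin down the signs and the placements before running the general induction, I would first check the identity by hand on the small cases $\{u'_\lambda u\}$ and $\{u^2{}_\lambda u\}$.
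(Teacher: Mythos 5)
Your proposal is correct and is exactly the argument the paper has in mind: the paper's entire proof is the one-line remark that the formula ``directly follows from the sesqui-linearity and the Leibniz rule,'' and your two-stage reduction (second slot via the Leibniz rule and right sesqui-linearity, first slot via the left Leibniz rule derived from skew-symmetry, then composition) is the standard way of filling in that one-liner. The only point worth noting is that you correctly make explicit the role of skew-symmetry in producing the $\to$ convention and the $(-\lambda-\partial)^m$ shift, which the paper's citation of ``sesqui-linearity and the Leibniz rule'' leaves implicit.
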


\begin{proof}
It directly follows from the sesqui-linearity and the Leibniz rule.
\end{proof}

\begin{ex} \label{VM-lambda}
Let $\VV$ be the Virasoro-Magri Poisson vertex algebra. Then we have
\begin{equation*}
\begin{aligned}
 & \{ \partial^m L_\lambda \partial^n L\} = ( \partial+\lambda)^n \{L_{\lambda+\partial} L\}_\to (-\partial-\lambda)^m
 =  ( \partial+\lambda)^n \{L_{\lambda} L\} (-\lambda)^m \\
 & = \sum_{i=0}^n { n \choose i } \lambda^i \partial^{n-i} (\partial +2\lambda) L (-\lambda)^m + \lambda^{n+m+3}(-1)^m c.
 \end{aligned}
\end{equation*}
Hence the $j$-th products $(j\in \ZZ_{\geq 0})$ of $\partial^m L$
and $\partial^n L$ are given by
\begin{equation*}
(\partial^{m}L)_{(j)}(\partial^{n}L) = \begin{cases} 2(m+n+1)!\, L \
\ & \text{for} \ j=m+n+1, \\
(m+n+3)!\, c \ \ & \text{for} \ j=m+n+3, \\
j! \, \left( {n \choose j-m}+2{n\choose
j-m-1}\right)\partial^{n-j+1}L \ \ & \text{for} \ j=m+1, \ldots,
m+n, \\
m! \, \partial^{n+1} L \ \ & \text{for} \ j=m, \\
0 \ \ & \text{otherwise}.
\end{cases}
\end{equation*}

In general, if we set $l_n:= \partial^n L$,  then
\begin{equation} \label{VMgen}
\begin{aligned}
&\textstyle{  \left\{  \prod_{i=1}^s l_{m_i} \, _\lambda\, \prod_{j=1}^t  l_{n_j} \right\} }  = \sum_{\substack{1 \le p \le s \\ 1 \le q \le t}} \,
\left(\prod_{j \neq q } \,  l_{n_j}\right)
(\lambda+\partial)^{n_j} \{L_{\lambda+\partial} L\}
(-\lambda-\partial)^{m_i} \left(\prod_{i\neq p} \, l_{m_i}\right).
\end{aligned}
\end{equation}
By a direct calculation, one can verify that
$$(l_{m_1} \cdots l_{m_s})_{(n)} (l_{n_1} \cdots l_{n_t}) = n! \,
c_{n},$$ where $c_{n} \in \VV$ is the coefficient of $\lambda^n$ in
\eqref{VMgen}.
\end{ex}

\begin{rmk} \label{VM_Z}  \ 
\begin{enumerate}
\item
Let $\VV_\ZZ$ be an integral form of the Virasoro-Magri PVA $\VV$. If $\{ \VV_\ZZ \, _\lambda \VV_\ZZ\} \subset \VV_\ZZ[\lambda]$, then we say that $\lambda$-bracket is well-defined on  $\VV_\ZZ$.
\item
If the central charge $c\in \ZZ$, then the $\lambda$-bracket is well-defined on the integral form $\VV_\ZZ= \ZZ[\partial^n L| \, n\in\ZZ_{\geq 0}]$.
\end{enumerate}
\end{rmk}

\vskip 3mm

In the rest of this section, we will investigate the relation
between Poisson algebras and Poisson vertex algebras. For this
purpose, we first introduce the notion of Hamiltonian operators on
Poisson vertex algebras.

\begin{defn}
Let $\VV$ be a PVA. A {\it Hamiltonian operator} on $\VV$ is a
diagonalizable linear operator  such that
\[ H(a_{(n)} b)= (Ha)_{(n)} b+ a_{(n)}Hb-(n+1) a_{(n)}b \ \ \text{for all} \ a,b\in \VV. \]
\end{defn}
For an eigenvector $a$ of $H$, we denote its eigenvalue by
$\Delta_a$ and we call it the {\it conformal weight} of $a.$

A main source of a Hamiltonian operator is an energy momentum field
$L$; i.e., $L$ satisfies
\begin{enumerate}
\item $\{L_\lambda L\}= (\partial +2\lambda) L + \frac{c}{12} \lambda^3,$
\item $L_{-1}= \partial,$
\item $L_{0}$ is a diagonalizable operator.
\end{enumerate}
Then $L_{0}$ is a Hamiltonian operator. (Here, by convention, we
denote $L_{n}=L_{(n+1)}$ $(n \in \ZZ)$.

The Poisson vertex algebras with Hamiltonian operators can be
considered as chiralizations of Poisson algebras. In other words,
the $\lambda$-brackets and the multiplications of these Poisson
vertex algebras induce Poisson brackets and the multiplications of
corresponding Poisson algebras.

Let $\VV$ be a Poisson vertex algebra with a Hamiltonian operator
$H$. Consider the vector space $\VV_\hbar:= \VV[\hbar]$ endowed with
the $\CC[\hbar]$-linear commutative associative product $\cdot$
induced from $\VV$. We define the $\CC[\hbar]$-bilinear
$\hbar$-bracket on $\VV_{\hbar}$ by
\[\{a , b\}_{\hbar}= \sum_{j\in \ZZ_{\geq 0}} {\Delta_a -1\choose j } \hbar^j a_{(j)}b
\ \ \text{for} \ a,b\in \VV.\]

Let $J_{\hbar}$ be the $\CC[\hbar]$-submodule of $\VV_{\hbar}$
generated by
\[ (\partial + \hbar H) a \cdot b  \ \ (a,b \in \VV).\]
Then we have
\[ \partial^{(n)} a \equiv \hbar^n { -\Delta_a \choose n} a \qquad \text{ mod} J_\hbar \]
and the following theorem holds.

\begin{thm} \cite{DK}
The submodule $J_\hbar$ is a two-sided ideal of $\VV_\hbar$ with respect to the commutative associative product $\cdot$ and the $\hbar$-bracket $\{ \, , \, \}_\hbar$.
\end{thm}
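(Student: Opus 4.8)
The plan is to verify the ideal property separately for the commutative product $\cdot$ and for the $\hbar$-bracket $\{\,,\,\}_\hbar$, after reducing to the case of $H$-eigenvectors. Since $H$ is diagonalizable, $\VV$ has a $\CC$-basis of eigenvectors and both products are $\CC[\hbar]$-bilinear, so it suffices to treat generators $(\partial+\hbar H)a\cdot b$ against test elements $c$, with $a,b,c$ eigenvectors; throughout I will use the identity $\Delta_{c_{(j)}a}=\Delta_c+\Delta_a-j-1$, which is exactly the Hamiltonian relation $H(c_{(j)}a)=(Hc)_{(j)}a+c_{(j)}(Ha)-(j+1)c_{(j)}a$, together with the reduction $\partial^{(n)}d\equiv\hbar^n\binom{-\Delta_d}{n}d\pmod{J_\hbar}$ recorded before the statement. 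The product case is immediate: a generator $(\partial+\hbar H)a\cdot b$ multiplied by any $d\in\VV$ equals $(\partial+\hbar H)a\cdot(bd)$, again a generator, and extending $\CC[\hbar]$-linearly gives $\VV_\hbar\cdot J_\hbar\subseteq J_\hbar$.

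For the bracket I first note that the PVA Leibniz rule $\{a_\lambda bc\}=b\{a_\lambda c\}+c\{a_\lambda b\}$ descends coefficientwise to $a_{(j)}(bc)=b(a_{(j)}c)+c(a_{(j)}b)$ and hence to $\{a,bc\}_\hbar=b\{a,c\}_\hbar+c\{a,b\}_\hbar$, so $\{\,,\,\}_\hbar$ is a derivation in its second argument. The crux of the right-hand property $\{\VV_\hbar,J_\hbar\}_\hbar\subseteq J_\hbar$ is then the vanishing
\[\{c,(\partial+\hbar H)a\}_\hbar\equiv0\pmod{J_\hbar}.\]
To prove it I expand $\{c,\partial a\}_\hbar$ via the sesqui-linearity relation $c_{(j)}(\partial a)=\partial(c_{(j)}a)+j\,c_{(j-1)}a$, replace $\partial(c_{(j)}a)$ by $-\hbar\,\Delta_{c_{(j)}a}(c_{(j)}a)$ modulo $J_\hbar$, reindex the term carrying $c_{(j-1)}a$ using $(k+1)\binom{\Delta_c-1}{k+1}=(\Delta_c-1-k)\binom{\Delta_c-1}{k}$, and add $\hbar\Delta_a\{c,a\}_\hbar$; the coefficient of $\hbar^{j+1}\binom{\Delta_c-1}{j}c_{(j)}a$ collapses to $-(\Delta_c+\Delta_a-j-1)+(\Delta_c-1-j)+\Delta_a=0$. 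Given this, the second-argument Leibniz rule yields $\{c,(\partial+\hbar H)a\cdot b\}_\hbar=b\,\{c,(\partial+\hbar H)a\}_\hbar+(\partial+\hbar H)a\cdot\{c,b\}_\hbar$, and both summands lie in $J_\hbar$: the first because $J_\hbar$ is a $\cdot$-ideal and the bracket factor is in $J_\hbar$, the second because it is $(\partial+\hbar H)a$ times a $\CC[\hbar]$-combination of elements of $\VV$.

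For the left-hand property $\{J_\hbar,\VV_\hbar\}_\hbar\subseteq J_\hbar$ I reduce to the previous case by showing that $\{\,,\,\}_\hbar$ is \emph{skew-symmetric modulo $J_\hbar$}, i.e. $\{x,y\}_\hbar\equiv-\{y,x\}_\hbar\pmod{J_\hbar}$. Starting from the skew-symmetry of the $\lambda$-bracket in the form $b_{(m)}a=-\sum_{k\ge0}\frac{(-1)^{m+k}}{k!}\partial^k(a_{(m+k)}b)$, I apply the reduction formula to each $\partial^{(k)}(a_{(m+k)}b)$, collect the coefficient of $\hbar^n a_{(n)}b$ by setting $n=m+k$, and evaluate the inner sum by the Vandermonde identity $\sum_{m=0}^n\binom{\Delta_b-1}{m}\binom{-\Delta_a-\Delta_b+n+1}{n-m}=\binom{n-\Delta_a}{n}$. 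The upper-negation identity $\binom{n-\Delta_a}{n}=(-1)^n\binom{\Delta_a-1}{n}$ cancels the sign $(-1)^n$ coming from the skew-symmetry formula, producing exactly $-\{a,b\}_\hbar$. With skew-symmetry established, for $z\in J_\hbar$ we get $\{z,w\}_\hbar\equiv-\{w,z\}_\hbar\pmod{J_\hbar}$, and the right-hand property already proved shows $\{w,z\}_\hbar\in J_\hbar$, whence $\{z,w\}_\hbar\in J_\hbar$.

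The two main obstacles are bookkeeping ones: obtaining the exact cancellation in the key vanishing lemma, and tracking the signs in the skew-symmetry computation so that the $(-1)^n$ from the Lie-conformal skew-symmetry is precisely annihilated by the upper-negation identity — if this sign matching failed, skew-symmetry modulo $J_\hbar$, and with it the left ideal property, would collapse. The structural reason both computations close is that the binomial weights $\binom{\Delta_a-1}{j}$ built into the $\hbar$-bracket are tuned exactly to the conformal-weight shift produced by $\partial$ modulo $J_\hbar$.
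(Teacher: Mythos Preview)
The paper does not prove this theorem at all; it is stated with a citation to De Sole--Kac \cite{DK} and used as a black box, so there is no in-paper argument to compare against. Your proof is correct and is essentially the standard one: the $\cdot$-ideal property is trivial by commutativity; the right-$\{\,,\,\}_\hbar$-ideal property follows from the second-variable Leibniz rule together with the key vanishing $\{c,(\partial+\hbar H)a\}_\hbar\equiv 0\pmod{J_\hbar}$, whose verification via $c_{(j)}(\partial a)=\partial(c_{(j)}a)+j\,c_{(j-1)}a$, the substitution $\partial d\equiv -\hbar\Delta_d\,d$, and the binomial reindexing $(k+1)\binom{\Delta_c-1}{k+1}=(\Delta_c-1-k)\binom{\Delta_c-1}{k}$ is clean; and the left-ideal property is reduced to the right one through skew-symmetry of $\{\,,\,\}_\hbar$ modulo $J_\hbar$, which you obtain correctly from the Lie-conformal skew-symmetry, the reduction $\partial^{(k)}d\equiv\hbar^k\binom{-\Delta_d}{k}d$, Vandermonde, and upper negation. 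The sign cancellation you flag as delicate does indeed close exactly as you describe.
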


Hence $\VV_\hbar/J_\hbar$ is a Poisson algebra endowed with the
product $\cdot$ and the bracket $\{\, , \, \}_\hbar.$ By
specializing $\hbar=c$ for some $c\in \CC$, we get a Poisson algebra
$\VV/J_{\hbar=c}.$ In particular, when $c=1$, the algebra is called
the $H$-twisted Zhu algebra.

\begin{defn} \cite{DK}
Let $\VV$ be a Poisson vertex algebra with a Hamiltonian operator
$H$. Then the Poisson algebra $\text{Zhu}_{H}(\VV):= \VV/J_{\hbar=1}$
is called the {\it $H$-twisted Zhu algebra of $\VV$}.
\end{defn}

As can be seen in the following proposition, the Zhu algebras have a
simple description under some reasonable restrictions.

\begin{prop} \label{Prop:zhu} \cite{DK}
Let $R$ be a Lie conformal algebra (or a central extension of a Lie
conformal algebra) and let $S(R)$ be the Poisson vertex algebra
endowed with the $\lambda$-bracket which is defined by that of $R$
and the Leibniz rule.

If $\VV=S(R)$ has a Hamiltonian operator $H$, then $\text{Zhu}_H \VV
\simeq S(R/\partial R)$ as Poisson algebras, where the Poisson
bracket on $S(R/\partial R)$ is defined by
\[ \{ a+\partial R ,b+\partial R\}=\{a_\lambda b\}|_{\lambda=0}+\partial R \ \ \text{for} \ a,b \in R\]
and the Leibniz rule.
\end{prop}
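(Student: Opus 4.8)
The plan is to realize the $H$-twisted Zhu algebra $\text{Zhu}_H(\VV) = \VV/J_{\hbar=1}$ as a quotient of the commutative differential algebra $\VV = S(R)$ by the ideal generated by the image of a single derivation, and then to identify that quotient with $S(R/\partial R)$, first as a commutative algebra and then as a Poisson algebra. The structural fact that makes everything run is that $H$ is a derivation of the commutative product: specializing the Hamiltonian identity $H(a_{(n)}b) = (Ha)_{(n)}b + a_{(n)}(Hb) - (n+1)a_{(n)}b$ at $n=-1$, where $a_{(-1)}b = a\cdot b$, gives $H(a\cdot b) = (Ha)\cdot b + a\cdot (Hb)$. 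Since $\partial$ is also a derivation, $D := \partial + H$ is a derivation of $\VV$, and at $\hbar=1$ the submodule $J_{\hbar=1}$ is exactly the ideal generated by $D(\VV)$.

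Next I would prove surjectivity of a candidate map. Write $R = \big(\bigoplus_i \CC[\partial]a_i\big)\oplus Z$, where $Z = \ker(\partial|_R)$ consists of the central elements, of conformal weight $0$; then a $\CC$-basis of $S(R)$ is given by monomials in the $\partial^n a_i$ and the $z_k\in Z$. Using the congruence $\partial^{(n)}a \equiv \binom{-\Delta_a}{n}a \pmod{J_{\hbar=1}}$ established in the excerpt, and the fact that $J_{\hbar=1}$ is an ideal, each factor $\partial^n a_i$ of a monomial may be reduced factor by factor to a scalar multiple of $a_i$. Hence every class in $\text{Zhu}_H(\VV)$ is a combination of images of monomials in the $a_i$ and $z_k$, i.e. of a basis of $S(R/\partial R)$, producing a surjective algebra homomorphism $\Psi : S(R/\partial R)\to \text{Zhu}_H(\VV)$, $\bar a_i \mapsto [a_i]$, $\bar z_k\mapsto [z_k]$.

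For injectivity I would build an explicit left inverse. Let $\rho : \VV \to S(R/\partial R)$ be the algebra homomorphism determined by the linear map $R \to R/\partial R$ sending $\partial^n a_i \mapsto n!\binom{-\Delta_{a_i}}{n}\bar a_i$ and $z_k\mapsto \bar z_k$. Since $D$ is a derivation and $\rho$ is an algebra map, $\rho\circ D = 0$ follows by induction on degree once $\rho(Dr)=0$ is checked on generators $r\in R$; for $r = \partial^n a_i$ this reduces to the identity $(n+1)!\binom{-\Delta}{n+1} + (\Delta+n)\,n!\binom{-\Delta}{n} = 0$ with $\Delta = \Delta_{a_i}$, and for $r=z_k$ it uses $\partial z_k = 0$ and $\Delta_{z_k}=0$. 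Thus $\rho$ annihilates $J_{\hbar=1}$ and descends to $\bar\rho : \text{Zhu}_H(\VV)\to S(R/\partial R)$ with $\bar\rho\circ\Psi = \text{id}$ on generators, hence everywhere; together with surjectivity this makes $\Psi$ an isomorphism of commutative algebras.

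Finally I would verify that $\Psi$ intertwines the two Poisson brackets. Both are biderivations and the algebras are generated by $R/\partial R$, so it suffices to compare on generators $\bar a,\bar b$ with $a,b\in R$. On the Zhu side $\{[a],[b]\} = \big[\sum_{j\ge0}\binom{\Delta_a-1}{j}a_{(j)}b\big]$, while the target bracket is $\pi(a_{(0)}b) = \pi\big(\{a_\lambda b\}|_{\lambda=0}\big)$ for $\pi:R\to R/\partial R$ the projection. Transporting through $\bar\rho$, the claim becomes $\sum_{j\ge0}\binom{\Delta_a-1}{j}\rho(a_{(j)}b) = \pi(a_{(0)}b)$, where each $a_{(j)}b\in R$ is expanded in the $\CC[\partial]$-basis before $\rho$ is applied; using sesqui-linearity to track the action of $\partial$ inside the products together with the reductions $\rho(\partial^n c)=n!\binom{-\Delta_c}{n}\bar c$, the coefficients $\binom{\Delta_a-1}{j}$ are precisely calibrated so that the $j\ge1$ contributions cancel the non-constant parts arising from $j=0$, leaving $\pi(a_{(0)}b)$. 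I expect this identity to be the main obstacle: it is the one genuine computation and the unique point at which the specific binomial coefficients defining the $\hbar$-bracket are forced. Granting it, $\Psi$ is an isomorphism of Poisson algebras, which is the assertion.
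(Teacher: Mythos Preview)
The paper does not supply its own proof of this proposition: it is quoted verbatim from \cite{DK} and no argument is given. So there is no ``paper's proof'' to compare against.

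On its own merits, your outline is sound. The commutative-algebra half is completely correct: $H$ is a derivation of the product (from the $n=-1$ case of the Hamiltonian identity), $D=\partial+H$ is a derivation, $J_{\hbar=1}$ is the ideal generated by $D(\VV)$, and your explicit retraction $\rho$ shows $\text{Zhu}_H(\VV)\cong S(R/\partial R)$ as commutative algebras. Your binomial check $(n+1)!\binom{-\Delta}{n+1}+(\Delta+n)\,n!\binom{-\Delta}{n}=0$ is exactly what is needed for $\rho\circ D=0$ on generators.

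The only place that is genuinely sketched is the Poisson compatibility, and you flag this honestly. A small clarification may help you finish it cleanly: the two projections $\pi:R\to R/\partial R$ and $\rho|_R:R\to R/\partial R$ are \emph{different} linear maps (they have kernels $\partial R$ and $(\partial+H)R$ respectively), so the assertion $\rho\bigl(\sum_j\binom{\Delta_a-1}{j}a_{(j)}b\bigr)=\pi(a_{(0)}b)$ is not a statement about a single quotient. Equivalently, what must be checked is
\[
\sum_{j\ge 0}\binom{\Delta_a-1}{j}a_{(j)}b \;-\; (\text{$\partial$-free part of }a_{(0)}b)\ \in\ (\partial+H)R,
\]
and this follows by writing $a_{(0)}b$ in the $\CC[\partial]$-basis, using $\Delta_{a_{(j)}b}=\Delta_a+\Delta_b-j-1$ from the Hamiltonian identity, and the sesqui-linearity relations among the $a_{(j)}b$; the Virasoro case $\{L_\lambda L\}=(\partial+2\lambda)L$ already exhibits the mechanism, where $\partial L+2L=(\partial+H)L$ makes both sides vanish. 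With that computation in place your argument is complete.
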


\begin{rmk} Let $P$ be a Poisson algebra and let $P_\ZZ$ be an integral form of $P$. If $\{P_\ZZ, P_\ZZ\} \subset P_\ZZ$, then we say that the Poisson bracket on $P_\ZZ$ is well-defined.  
\end{rmk}

\vskip 5mm

\section{Symmetric groups and nil-Coxeter algebras} \label{Sec:SGNA}

In this section, we briefly review the representation theories of
symmetric groups and nil-Coxeter algebras. We first recall the
definition of Young diagrams and Young tableaux.

\begin{defn} Let $n$ be a non-negative integer.
\begin{enumerate}
\item A {\it partition} of $n$ is a sequence $\lambda = (\lambda_1,
\lambda_2,  \ldots, \lambda_l)$ of positive integers such that
$\lambda_1 \ge \lambda_2 \ge \cdots \ge \lambda_l >0$ and $\lambda_1
+ \lambda_2 + \cdots + \lambda_l = n$. When $\lambda$ is a partition
of $n$, we write $\lambda \vdash n$.

\item A {\it  Young diagram} $Y^{\lambda}$ for a partition $\lambda = (\lambda_1,
\lambda_2,  \ldots, \lambda_l) \vdash n$ is a collection of $n$
boxes arranged in left-justified rows with $\lambda_i$ boxes in the
$i$-th row $(i=1, \ldots, l)$. We often identify the partition
$\lambda$ with its Young diagram $Y^{\lambda}$.

\item A {\it standard Young tableau of shape $\lambda$} is an
assignment of $1, 2, \ldots, n$ to each box of $Y^{\lambda}$ such
that the entries in each row and column are strictly increasing from
left to right and from top to bottom.
\end{enumerate}
\end{defn}

%\begin{ex} Each picture below is the Young diagram $Y_\mu$ for $\mu=(4,2,1)\vdash 7$, a Young tableau of $Y_\mu$ and a standard Young tableau of $Y_\mu.$
%\begin{equation*} \label{YD}
%\yng(4,2,1)
%\qquad
%\young(1543,27,6)
%\qquad
%\young(1257,34,6)
%\end{equation*}
%\end{ex}

%In the set of Young tableaux of shape $\mu=(\mu_1, \cdots, \mu_l)$, we consider the equivalence relation $\sim$ defined as follows:
%\[T_1 \sim T_2 \quad \text{if each $i$-th row in $T_1$ and $T_2$ contains same numbers } \]
%where $T_1$ and $T_2$ are Young tableaux of shape  $\mu$ and $i=1, \cdots, l.$

%\begin{defn}
%\begin{enumerate}
%\item
%A {\it Young tabloid} is an equivalence class of Young tableaux with respect to the relation $\sim.$
%\item
%Let $M^\mu$ be a representation of $S_n$ spanned by the set of Young tabloids of shape $\mu$ and the $S_n$-action on each tabloid is the action on the numbers in the tabloid.
%\end{enumerate}
%\end{defn}

%\begin{ex}
%The following equality is an example of $S_7$ action on $M^\mu$ for $\mu=(4,2,1).$
%\[ (2,4,5)\cdot
%\young(1247,53,6)
%=
%\young(1457,23,6). \]
%\end{ex}

%Let $T$ be a Young tableau of shape $\mu\vdash n$ and $C_T$ be the subset of $S_n$ which preserves the columns of $T$ and we denote
%\[ e_T= \sum_{\pi \in C_T} sign(\pi)\pi(T) \in M^\lambda.\]

%\begin{prop}
%The subspace $S^\mu = \CC\text{-span of }\{ e_T | T \text{ is a tableau of shape } \mu \}$ is a submodule of $M^\mu$ and $S^\mu$ is called a {\it Specht module.}
%\end{prop}

Let $\Sigma_{n}$ be the symmetric group on $n$ letters and let $\CC
\Sigma_{n}$ be its group algebra. For each partition $\lambda \vdash
n$, we denote by $S^{\lambda}$ the $\CC$-vector space with basis
consisting of standard tableaux of shape $\lambda$. Then
$S^{\lambda}$ is a simple $\Sigma_{n}$-module, called the {\it
Specht module}. Moreover, we have the following theorem (cf. \cite{F, Sa}).

\begin{thm} \label{Thm:Specht}
Two Specht modules $S^\mu$ and $S^{\nu}$ are isomorphic if and only if $\mu=\nu.$
Moreover, the set of Specht modules $\{S^\mu\, |\, \mu \vdash n\}$ is a complete list of mutually non-isomorphic  irreducible representations of $\Sigma_n$.
\end{thm}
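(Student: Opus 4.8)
The plan is to realize each $S^\lambda$ concretely through the classical polytabloid construction and then run James's submodule machinery. First I would build the permutation module $M^\lambda$ spanned over $\CC$ by the Young tabloids $\{t\}$ (the row-equivalence classes of $\lambda$-tableaux), on which $\Sigma_n$ acts by permuting entries. For a $\lambda$-tableau $t$ with column stabilizer $C_t$, I would form the antisymmetrizer $\kappa_t = \sum_{\pi \in C_t} \mathrm{sgn}(\pi)\,\pi$ and the polytabloid $e_t = \kappa_t \{t\}$, and define $S^\lambda$ as the $\Sigma_n$-submodule of $M^\lambda$ generated by the $e_t$. The first structural step is to prove that the polytabloids indexed by \emph{standard} tableaux of shape $\lambda$ are linearly independent and span $S^\lambda$; this identifies $S^\lambda$ with the space described in the statement, and endows it with the $\Sigma_n$-module structure implicit there, of dimension equal to the number of standard tableaux.

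Next I would equip $M^\lambda$ with the $\Sigma_n$-invariant symmetric bilinear form for which the tabloids are orthonormal. The combinatorial core is the lemma that for any tabloid $\{s\}$ and any $\lambda$-tableau $t$, the element $\kappa_t \{s\}$ is an integer multiple of $e_t$, nonzero only when $\{s\}$ arises from $t$ by a column permutation. From this I would derive the Submodule Theorem (due to James): every $\Sigma_n$-submodule $U \subseteq M^\lambda$ satisfies either $S^\lambda \subseteq U$ or $U \subseteq (S^\lambda)^\perp$. Because we work over $\CC$, the form restricted to $S^\lambda$ is nondegenerate, so $S^\lambda \cap (S^\lambda)^\perp = 0$; applying the Submodule Theorem to any nonzero submodule of $S^\lambda$ then forces it to be all of $S^\lambda$, which gives irreducibility.

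For distinctness, the key input is the dominance-order estimate: a nonzero $\Sigma_n$-homomorphism $S^\mu \to M^\nu$ can exist only if $\mu \trianglerighteq \nu$. An isomorphism $S^\mu \cong S^\nu$ yields a nonzero map $S^\mu \to S^\nu \hookrightarrow M^\nu$ and, symmetrically, a nonzero map $S^\nu \to M^\mu$, whence $\mu \trianglerighteq \nu$ and $\nu \trianglerighteq \mu$, so $\mu = \nu$. Finally, completeness is a counting argument: over $\CC$ the number of irreducible $\Sigma_n$-modules equals the number of conjugacy classes, which is exactly the number of partitions of $n$. Having produced that many pairwise non-isomorphic irreducibles $\{S^\lambda : \lambda \vdash n\}$, I conclude they exhaust the irreducible representations.

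The main obstacle is the Submodule Theorem together with its underlying $\kappa_t$-lemma and the dominance-order comparison: these are where the combinatorics of columns and the partial order on partitions do the real work, and everything else (irreducibility, distinctness, completeness) follows formally once they are in place.
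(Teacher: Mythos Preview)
Your proposal is correct and follows the standard James submodule-theorem argument, essentially as presented in Sagan's book. However, the paper does not actually prove this theorem at all: it is stated as a classical result with the citation ``(cf.\ \cite{F, Sa})'' to Fulton and Sagan, and no proof is given in the paper itself. So there is nothing to compare your argument against beyond noting that what you have written is precisely the textbook proof one would find in the cited references.
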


We denote by $ \Sigma_{n} \text{-mod}$ the category of finite
dimensional $ \Sigma_{n}$-modules. Consider the induction and
restriction functors given below.
\begin{equation*}
\begin{aligned}
& \text{Ind}_{n}^{n+1}:  \Sigma_{n}\text{-mod} \rightarrow 
\Sigma_{n+1}\text{-mod}, \quad M \mapsto \CC \Sigma_{n+1}
\otimes_{\CC \Sigma_{n}} M, \\
& \text{Res}_{n}^{n+1}:  \Sigma_{n+1}\text{-mod} \rightarrow 
\Sigma_{n} \text{-mod}, \quad N \mapsto \CC \Sigma_{n+1}
\otimes_{\CC \Sigma_{n+1}} N.
\end{aligned}
\end{equation*}

These functors can be visualized by the following proposition (cf.
\cite{F, Sa}).

\begin{prop}[Branching rules] \label{Thm:branching}
For each partition $\lambda \vdash n$, the induction and restriction
functors yield the following decompositions:
$$\text{Ind}_{n}^{n+1} (S^{\lambda}) = \bigoplus_{\Box} S^{\lambda
\leftarrow \Box}, \quad \text{Res}_{n}^{n+1} (S^{\lambda}) =
\bigoplus_{\Box} S^{\lambda \rightarrow \Box},$$ where $\lambda
\leftarrow \Box$  denotes a Young diagram with $(n+1)$-boxes
obtained by adding a box to $\lambda$ and $\lambda \rightarrow \Box$
is a Young diagram with $(n-1)$-boxes obtained by removing a box
from $\lambda$. 
\end{prop}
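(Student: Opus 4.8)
The plan is to exploit the semisimplicity of the group algebras $\CC\Sigma_n$ (Maschke's theorem) together with Frobenius reciprocity, so that it suffices to establish just one of the two decompositions and then read off the other by an adjunction. Since every finite-dimensional $\Sigma_n$-module is a direct sum of Specht modules and these exhaust the irreducibles (Theorem \ref{Thm:Specht}), the content of each identity is purely a statement about multiplicities, which I will phrase as dimensions of $\text{Hom}$-spaces.

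First I would record that $\text{Ind}_n^{n+1}$ is both left and right adjoint to $\text{Res}_n^{n+1}$: left adjointness is Frobenius reciprocity, and right adjointness holds because $\CC\Sigma_{n+1}$ is free, hence projective and injective, over $\CC\Sigma_n$. Consequently, for $\lambda \vdash n$ and $\nu \vdash n+1$,
$$[\text{Ind}_n^{n+1} S^\lambda : S^\nu] = \dim \text{Hom}_{\Sigma_{n+1}}(\text{Ind}_n^{n+1} S^\lambda, S^\nu) = \dim \text{Hom}_{\Sigma_n}(S^\lambda, \text{Res}_n^{n+1} S^\nu) = [\text{Res}_n^{n+1} S^\nu : S^\lambda],$$
where the outer equalities use that $S^\lambda$ and $S^\nu$ are simple in a semisimple category. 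Thus the two branching rules are equivalent: the multiplicity of $S^\nu$ in $\text{Ind}(S^\lambda)$ equals the multiplicity of $S^\lambda$ in $\text{Res}(S^\nu)$, and it remains only to prove the restriction decomposition.

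For the restriction rule I would work in the polytabloid model, with $S^\lambda$ ($\lambda \vdash m$) having basis $\{e_t : t \text{ standard}\}$, and restrict along the subgroup $\Sigma_{m-1} \subset \Sigma_m$ fixing $m$. In any standard tableau the entry $m$ occupies a removable corner of $\lambda$; let $A_1, \ldots, A_k$ be these corners, ordered suitably (say from the highest row to the lowest), and set $\mu^{(i)} = \lambda \setminus \{A_i\}$. I would then define an increasing chain of subspaces $V_i \subseteq S^\lambda$ spanned by those $e_t$ whose entry $m$ lies in $A_1, \ldots,$ or $A_i$, and prove: (a) each $V_i$ is $\Sigma_{m-1}$-stable; and (b) the assignment $e_t \mapsto e_{t'}$, where $t'$ is the standard $\mu^{(i)}$-tableau obtained by deleting the corner box holding $m$, induces an isomorphism $V_i/V_{i-1} \xrightarrow{\sim} S^{\mu^{(i)}}$. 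Granting (a) and (b), semisimplicity forces the filtration $0 = V_0 \subset V_1 \subset \cdots \subset V_k = \text{Res}_{m-1}^m S^\lambda$ to split, giving $\text{Res}_{m-1}^m S^\lambda \cong \bigoplus_i S^{\mu^{(i)}}$, which is exactly $\bigoplus_\Box S^{\lambda \to \Box}$. I expect the main obstacle to be precisely the verification of (a) and (b): the action of $\Sigma_{m-1}$ can carry $e_t$ out of the span of standard polytabloids, so one must re-expand via the Garnir relations (the straightening algorithm) and check that the position of $m$ is preserved modulo $V_{i-1}$ and that the induced action on the quotient matches the Specht action on $S^{\mu^{(i)}}$. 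This bookkeeping with the column antisymmetrizer and Garnir elements is the technical heart; everything else is formal.

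As an alternative that sidesteps the straightening combinatorics, I would instead invoke the Frobenius characteristic map $\mathrm{ch}\colon \bigoplus_n K_0(\CC\Sigma_n) \to \Lambda$ to the ring of symmetric functions, under which $S^\lambda \mapsto s_\lambda$ and the external induction product corresponds to multiplication. Since $\text{Ind}_n^{n+1} S^\lambda = \text{Ind}_{\Sigma_n \times \Sigma_1}^{\Sigma_{n+1}}(S^\lambda \boxtimes S^{(1)})$ has characteristic $s_\lambda \cdot s_{(1)}$, the induction rule is then precisely the Pieri rule $s_\lambda s_{(1)} = \sum_\Box s_{\lambda \leftarrow \Box}$, and the restriction rule follows again from the adjunction established above.
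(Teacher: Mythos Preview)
Your proposal is correct. Both routes you sketch---the filtration of $\text{Res}\,S^\lambda$ by the position of the largest entry, straightened via Garnir relations, and the symmetric-function argument through the Frobenius characteristic and the Pieri rule---are standard and sound; the adjunction step linking the two branching rules is also fine.

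The paper, however, does not prove this proposition at all: it is stated as a classical fact with a citation to Fulton and Sagan, and no argument is given. So there is no ``paper's own proof'' to compare against. For what it is worth, your first approach is essentially Sagan's treatment and your second is close to Fulton's, so either one would be an appropriate expansion of the citation.
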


%\begin{ex}
%Let $\mu= (4,2,2,1)$. Then
%\[\text{Ind}_{S_9}^{S_{10}}S^\mu= S^{\mu^1}\oplus S^{\mu^2} \oplus S^{\mu^3} \oplus S^{\mu^4}\]
%where $\mu^1=(5,2,2,1)$, $\mu^2=(4,3,2,1)$, $\mu^3=(4,2,2,2)$ and $\mu^4=(4,2,2,1,1)$
%and
%\[\text{Res}_{S_{8}}^{S_9}S^\mu = S^{\nu^1}\oplus S^{\nu^2} \oplus S^{\nu^3}\]
%where $\nu^1=(3,2,2,1)$, $\nu^2=(4,2,1,1)$ and $\nu^3=(4,2,2).$

%\end{ex}

\subsection{The representation theory of nil-Coxeter algebras} \label{Subsec:nil}

We refer to the paper of Khovanov \cite{Kho} and the survey paper by Savage \cite{S} for the detailed proof of propositions and theorems in Section \ref{Subsec:nil}.
Recall the symmetric group $\Sigma_n$ is generated by simple transpositions $s_i$, $i=1, \cdots, n-1$ with defining relations 
\begin{equation} \label{Eqn:Sn}
\begin{aligned}
&  s_i^2= id \qquad \text{ for }  i=1, \cdots, n-1, \\
& s_i s_j = s_j s_i \qquad \text{ for } i,j=1, \cdots, n-1 \text{ such that } |i-j|\geq 2, \\
& s_i s_{i+1} s_i = s_{i+1} s_i s_{i+1}\qquad \text{ for } i=1, \cdots ,n-2.
\end{aligned}
\end{equation}

If we replace the first relation by $s_i^2=0$, we get another algebra $N_n$ called the nil-Coxeter algebra.

\begin{defn}
A nil-Coxeter algebra $N_n$ is generated by $n_i$, $i=2, \cdots, n-1$ with defining relations
\begin{equation} \label{Eqn:Nn}
\begin{aligned}
&  n_i^2= 0 \qquad \text{ for }  i=1, \cdots, n-1, \\
& n_i n_j = n_j n_i \qquad \text{ for } i,j=1, \cdots n-1 \text{ such that } |i-j|\geq 2, \\
& n_i n_{i+1} n_i = n_{i+1} n_i n_{i+1}\qquad \text{ for } i=1, \cdots n-2.
\end{aligned}
\end{equation}
For $n=0,1$, we let $N_n=\CC.$
\end{defn}

Since the relations of the algebra $N_n$ is homogeneous, it has a $\ZZ_{\geq 0}$-grading decomposition $N_n= \bigoplus_{k\geq 0} N_n(k)$ by letting $\deg(n_i)=1$, $i=1, \cdots, n-1.$ If we consider an ideal $I_n$  generated by $n_i$, $i=1, \cdots, n-1$, then $I_n= \bigoplus_{k>0} N_n(k)$ and $I_n$ is the maximal ideal of $N_n.$

\begin{prop} \label{Prop:2.11_1110} \cite{Kho,S}
The maximal ideal $I_n$ defined above satisfies the following properties:
\begin{enumerate}
\item $I_n$ is the unique maximal ideal of $N_n.$
\item There is a unique simple module $L_n$ which is isomorphic to $N_n/I_n.$
\end{enumerate}
\end{prop}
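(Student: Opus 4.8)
The plan is to show that $N_n$ is a \emph{local} algebra whose unique maximal ideal is exactly $I_n$; once this is established, both assertions follow from standard facts about local rings. The starting point is the observation that $I_n$ is \emph{nilpotent}. Since the defining relations \eqref{Eqn:Nn} are homogeneous, $N_n = \bigoplus_{k\geq 0} N_n(k)$ is a finite-dimensional positively graded algebra with $N_n(0) = \CC\cdot 1$ and with $N_n(k) = 0$ once $k$ exceeds $\binom{n}{2}$, the length of the longest element of $\Sigma_n$; here I would invoke the standard basis $\{n_w \mid w \in \Sigma_n\}$, where $n_w = n_{i_1}\cdots n_{i_\ell}$ for a reduced expression of $w$ (well defined by the braid relations). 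Because $I_n = \bigoplus_{k\geq 1} N_n(k)$ and hence $I_n^{m} \subseteq \bigoplus_{k\geq m} N_n(k)$, we obtain $I_n^{\binom{n}{2}+1} = 0$.

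Next I would prove that every element of $N_n$ outside $I_n$ is a unit. Given $a \notin I_n$, its degree-zero component is a nonzero scalar, so we may write $a = c\cdot 1 + x$ with $c \in \CC^\times$ and $x \in I_n$. As $x$ is nilpotent, $1 + c^{-1}x$ is invertible with inverse $\sum_{k\geq 0}(-c^{-1}x)^k$, a finite sum; hence $a = c(1 + c^{-1}x)$ is a unit. Consequently the set of non-units of $N_n$ is precisely $I_n$, a proper two-sided ideal, which is exactly the statement that $N_n$ is local with maximal ideal $I_n$; in particular $I_n = J(N_n)$, the Jacobson radical.

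From locality, part (1) is immediate: any proper left, right, or two-sided ideal can contain no unit and therefore lies in the set of non-units $I_n$, so $I_n$ is the unique maximal ideal. For part (2), let $L$ be any simple $N_n$-module. Then $I_n L$ is a submodule of $L$, so either $I_n L = 0$ or $I_n L = L$; the latter would force $L = I_n^{m}L = 0$ for $m = \binom{n}{2}+1$, a contradiction. Hence $I_n$ annihilates $L$, so $L$ is a simple module over $N_n/I_n \cong \CC$, forcing $\dim_\CC L = 1$ and $L \cong N_n/I_n$. This identifies the unique simple module $L_n$ with $N_n/I_n$.

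The only genuinely nontrivial input is the nilpotency of $I_n$ in the first step; everything afterward is the standard local-ring argument requiring no computation. The point to be careful about is therefore the finite-dimensionality of $N_n$ together with the boundedness of its grading—equivalently, that any product of more than $\binom{n}{2}$ generators vanishes because the corresponding word in $\Sigma_n$ is non-reduced and the relation $n_i^2 = 0$ applies. Once this is in hand, the locality of $N_n$ and both conclusions follow formally.
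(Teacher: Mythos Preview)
Your argument is correct and is the standard one: $N_n$ is a finite-dimensional positively graded algebra with one-dimensional degree-zero part, so the augmentation ideal $I_n$ is nilpotent, forcing $N_n$ to be local with $I_n = J(N_n)$; both claims then follow immediately. The paper itself gives no proof of this proposition---it is stated with a citation to Khovanov \cite{Kho} and Savage \cite{S}, and the surrounding text explicitly refers the reader to those sources for the details---so there is nothing to compare your approach against beyond noting that what you have written is exactly the argument one finds in those references.
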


Let $N_n$-mod be the category of finitely generated left $N_n$-modules and let $G_0(N_n)$ be its Grothendieck group. Denote by $[M]$ the isomorphism class of $M\in N_n$-mod. Then  by Proposition \ref{Prop:2.11_1110},  we have
\[ G_0(N_n)= \ZZ[L_n] \quad \text{ and } \quad [N_n]=n!\, [L_n]. \]

On the other hand,  let $N_n$-pmod be the category of finitely generated projective $N_n$-modules
and let $K_0(N_n)$ be its Grothendieck group. 

\begin{prop} \cite{Kho,S}
The projective cover of the unique simple $N_n$-module of $L_n$ is $N_n$. Hence
\[ K_0(N_n)= \ZZ[N_n].\]
\end{prop}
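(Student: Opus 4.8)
The plan is to exploit the fact that $N_n$ is a \emph{local} finite-dimensional algebra. By Proposition \ref{Prop:2.11_1110}, the ideal $I_n$ is the unique maximal (left) ideal of $N_n$, so it coincides with the Jacobson radical $\mathrm{rad}(N_n)$, and the quotient $N_n/I_n \cong L_n$ is the unique simple module. I would first record that $N_n$ is finite-dimensional over $\CC$ --- it has a $\CC$-basis indexed by reduced words, so $\dim_\CC N_n = n!$ --- which ensures that $N_n$-mod and $N_n$-pmod are Krull--Schmidt categories and that every module has a projective cover.

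Next I would show that the left regular module $N_n$ is indecomposable. Since $\mathrm{End}_{N_n}(N_n) \cong N_n^{\mathrm{op}}$ is again a local ring, its only idempotents are $0$ and $1$; hence $N_n$ admits no nontrivial direct-sum decomposition. An indecomposable projective module is, by definition, the projective cover of its head, and the head of $N_n$ is $N_n/\mathrm{rad}(N_n) = N_n/I_n \cong L_n$. This yields the first assertion: $N_n$ is the projective cover of $L_n$.

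For the Grothendieck group I would invoke the structure of projectives over a local Artinian ring. By Krull--Schmidt, every object of $N_n$-pmod decomposes into indecomposable projectives, and each indecomposable projective is the projective cover of a simple module. Since $L_n$ is the \emph{unique} simple $N_n$-module, $N_n$ is the unique indecomposable projective up to isomorphism, so every finitely generated projective $N_n$-module is isomorphic to $N_n^{\oplus k}$ for some $k \ge 0$. As all short exact sequences of projectives split, $K_0(N_n)$ is the free abelian group on the classes of indecomposable projectives, which here is the single class $[N_n]$; therefore $K_0(N_n) = \ZZ[N_n]$.

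The main obstacle, such as it is, lies in the middle step: cleanly establishing indecomposability of the regular representation and identifying the projective cover with its top. Both reduce entirely to the locality of $N_n$, which is already granted by Proposition \ref{Prop:2.11_1110}; once that is in hand, the uniqueness statements (unique simple, unique indecomposable projective, freeness of every finitely generated projective) follow from the standard Krull--Schmidt and projective-cover machinery for finite-dimensional algebras, with no further computation required.
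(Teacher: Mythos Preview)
Your argument is correct. Note, however, that the paper does not supply its own proof of this proposition: it is stated with a citation to \cite{Kho,S} and no argument is given in the text. What you have written is precisely the standard proof one finds in those references---use the grading to see that $I_n=\mathrm{rad}(N_n)$ and that $N_n/I_n\cong\CC$, conclude that $N_n$ is a local finite-dimensional algebra, and then invoke the general fact that over a local Artinian ring the regular module is the unique indecomposable projective and is the projective cover of the unique simple. There is nothing to compare against beyond this; your write-up would serve perfectly well as the omitted proof.
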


Since $N_n$ is a subalgebra of $N_{n+1}$, the algebra $N_{n+1}$ can be considered as an $(N_{n+1}, N_n)$-bimodule or an $(N_n, N_{n+1})$-bimodule. Hence there are induction and restriction functors
\[ Ind_{N_n}^{N_{n+1}}: N_n\text{-mod} \to N_{n+1}\text{-mod} , \qquad M \mapsto \, _{N_{n+1}}N_{{n+1}\, N_n} \otimes_{N_n} M,\]
\[ Res_{N_n}^{N_{n+1}}: N_{n+1}\text{-mod} \to N_n\text{-mod}, \qquad M \mapsto \, _{N_n}N_{{n+1}\, N_{n+1}} \otimes_{N_{n+1}} M.\]

The two functors satisfy the following proposition.

\begin{prop} \cite{Kho,S}
$Ind_{N_n}^{N_{n+1}}$ and $Res_{N_n}^{N_{n+1}}$ are both left and right projective functors for any $n\in \ZZ_{\geq 0}.$ Thus we get the linear maps induced from induction and restriction functors:
\[  Ind_{N_n}^{N_{n+1}}:G_0(N_n)\to G_0(N_{n+1}),\qquad
 Res_{N_n}^{N_{n+1}}:G_0(N_{n+1})\to G_0(N_{n}),\]
\[  Ind_{N_n}^{N_{n+1}}:K_0(N_n)\to K_0(N_{n+1}),\qquad
 Res_{N_n}^{N_{n+1}}:K_0(N_{n+1})\to K_0(N_{n}).\]
\end{prop}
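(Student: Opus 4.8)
The plan is to realize both functors as tensoring with the $(N_{n+1},N_n)$- or $(N_n,N_{n+1})$-bimodule $N_{n+1}$, and to reduce the entire statement to a single freeness assertion about this bimodule. Recall that $N_n$ has a $\CC$-basis $\{n_w \mid w\in\Sigma_n\}$ indexed by the symmetric group, where $n_w := n_{i_1}\cdots n_{i_k}$ for any reduced expression $w=s_{i_1}\cdots s_{i_k}$, and the multiplication is governed by length: $n_w n_v = n_{wv}$ if $\ell(wv)=\ell(w)+\ell(v)$ and $n_w n_v = 0$ otherwise. First I would record the general homological principle: if $B$ is an $(A',A)$-bimodule that is projective as a left $A'$-module and projective (hence flat) as a right $A$-module, then $B\otimes_A-$ is exact and carries finitely generated projective left $A$-modules to finitely generated projective left $A'$-modules. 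Consequently such a functor descends both to a map $G_0(A)\to G_0(A')$ via exactness and to a map $K_0(A)\to K_0(A')$ via preservation of projectives. Applied to $B=N_{n+1}$, the assertions for $Ind_{N_n}^{N_{n+1}}$ and $Res_{N_n}^{N_{n+1}}$ follow once $N_{n+1}$ is shown to be projective as a left and as a right $N_n$-module; projectivity over $N_{n+1}$ on the remaining side is automatic, since $N_{n+1}$ is free of rank one over itself.

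The heart of the argument is therefore the freeness of $N_{n+1}$ over $N_n$ on each side. Here I would invoke the parabolic coset decomposition of $\Sigma_{n+1}$ relative to the maximal parabolic subgroup $\Sigma_n$. Let $D$ be the set of minimal-length representatives of the right cosets $\Sigma_n\backslash\Sigma_{n+1}$; explicitly $D=\{1,\,s_n,\,s_n s_{n-1},\,\ldots,\,s_n s_{n-1}\cdots s_1\}$, so that $|D|=n+1=[\Sigma_{n+1}:\Sigma_n]$. The key combinatorial fact is that every $x\in\Sigma_{n+1}$ factors uniquely as $x=wd$ with $w\in\Sigma_n$, $d\in D$, and $\ell(x)=\ell(w)+\ell(d)$. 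Feeding this through the length-multiplicativity of the nil-Coxeter product yields $n_x=n_w\,n_d$, so the set $\{n_d\mid d\in D\}$ is a free basis for $N_{n+1}$ as a left $N_n$-module. The symmetric statement, using minimal representatives of the left cosets $\Sigma_{n+1}/\Sigma_n$, exhibits $N_{n+1}$ as a free right $N_n$-module.

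Assembling the pieces: for $Ind_{N_n}^{N_{n+1}}=N_{n+1}\otimes_{N_n}-$ the bimodule $N_{n+1}$ is free as a left $N_{n+1}$-module and free as a right $N_n$-module, while for $Res_{N_n}^{N_{n+1}}=N_{n+1}\otimes_{N_{n+1}}-$ it is free as a left $N_n$-module and free as a right $N_{n+1}$-module. In every case the relevant one-sided projectivity holds, so both functors are left and right projective; exactness together with preservation of finitely generated projectives then produces the four induced maps on $G_0$ and $K_0$. I expect the main obstacle to be the clean verification of the length-additive factorization $x=wd$ with $\ell(x)=\ell(w)+\ell(d)$ and of the resulting nonvanishing $n_x=n_w n_d$, since this is precisely where the degeneration $n_i^2=0$ of the nil-Coxeter relations could in principle collapse products and destroy linear independence. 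Once that combinatorial lemma is secured, the homological consequences are formal.
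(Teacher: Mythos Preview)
The paper does not prove this proposition; it is stated without proof and attributed to \cite{Kho,S}. So there is no in-paper argument to compare against.

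That said, your proof is correct and is essentially the standard one. The reduction to showing $N_{n+1}$ is free as a left and as a right $N_n$-module, and the verification of this via the length-additive coset decomposition $\Sigma_{n+1}=\Sigma_n\cdot D$ together with the nil-Coxeter multiplication rule $n_w n_v=n_{wv}$ when $\ell(wv)=\ell(w)+\ell(v)$, is exactly how Khovanov establishes the result in \cite{Kho}. Your anticipated ``obstacle'' is not really an obstacle: once you have the unique length-additive factorization $x=wd$, the identity $n_x=n_w n_d$ is immediate from the definition of the basis $\{n_w\}$, and linear independence follows because these elements are part of the given $\CC$-basis of $N_{n+1}$. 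No further work is needed there.
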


Consider the associative algebra $N=\bigoplus_{n=0}^\infty N_n$ which is not unital but has pairwise orthogonal idempotents $1_{N_n}$, $n\in \ZZ_{\geq 0}.$ Then each $N_n$-module $M$ becomes an $N$-module by letting $aM=0$ for any $a\in N_m$ with $m\neq n.$ Hence a $(N_n, N_m)$-module  is an $(N,N)$-module.

Let
\[\textstyle \mathcal{N}=\bigoplus_{n\geq 0} N_n \text{-mod} \]
be the full subcategory of finite-dimensional $N$-modules. Then the direct sums of functors $Ind=\bigoplus_{n\in \ZZ_{\geq 0}} Ind_{N_n}^{N_{n+1}}$ and $Res=\bigoplus_{n\in \ZZ_{\geq 0}} Res_{N_n}^{N_{n+1}}$ become endofunctors
\begin{equation} \label{Eqn:indres}
\textstyle Ind, Res: \mathcal{N}\to \mathcal{N}.
\end{equation}

Let $G_0(N)$ be the Grothendieck group of $\mathcal{N}$. Then we have  
\[ \textstyle G_0(N)= \bigoplus_{n\geq 0} G_0(N_n).\]
If we denote by $K_0(N)= \bigoplus_{n\geq 0} K_0(N_n)\subset G_0(N)$, then we have the following proposition.

\begin{prop} \cite{Kho,S}
The functors $Ind$, $Res$ induce linear maps between $G_0(N)$ and $K_0(N)$ such that
\[ Res([L_{n+1}])= [L_n], \qquad Ind([L_n])= (n+1) [L_{n+1}], \]
\[ Res([N_{n+1}])= (n+1)[N_n], \qquad Ind([N_n])= [N_{n+1}].\]

\end{prop}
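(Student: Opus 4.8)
The plan is to verify the four identities directly from the module structures, using a single structural fact about the nil-Coxeter tower: that $N_{n+1}$ is free of rank $n+1$ both as a left $N_n$-module and as a right $N_n$-module. To establish this I would pass to the coset decomposition of $\Sigma_{n+1}$ over $\Sigma_n$. The minimal-length right coset representatives of $\Sigma_n$ in $\Sigma_{n+1}$ are $c_0=e,\ c_1=s_n,\ c_2=s_n s_{n-1},\ \ldots,\ c_n=s_n s_{n-1}\cdots s_1$, giving $n+1$ elements with $\ell(c_j)=j$. For any $u\in\Sigma_n$ one has the length-additivity $\ell(u c_j)=\ell(u)+\ell(c_j)$ (for $c_1=s_n$ this is just $u(n)<u(n+1)=n+1$, and similarly in general). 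Writing $n_w$ for the basis element of $N_{n+1}$ attached to a reduced word of $w$, length-additivity means the products do not collapse: $n_u\,n_{c_j}=n_{u c_j}\neq 0$. Hence the map $N_n\to N_n n_{c_j}$, $a\mapsto a n_{c_j}$, carries the basis $\{n_u\}_{u\in\Sigma_n}$ bijectively onto a subset of the basis of $N_{n+1}$, so each $N_n n_{c_j}$ is a free left $N_n$-module of rank one and $N_{n+1}=\bigoplus_{j=0}^{n}N_n n_{c_j}$ is free of rank $n+1$. The right-module statement follows symmetrically using minimal left coset representatives $s_{n-j+1}\cdots s_{n-1}s_n$.

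Granting this, Formula 4 is immediate: $\text{Ind}(N_n)=N_{n+1}\otimes_{N_n}N_n\cong N_{n+1}$ as left $N_{n+1}$-modules, so $\text{Ind}([N_n])=[N_{n+1}]$. Formula 3 follows from the left-module freeness: restricting the regular module gives $\text{Res}(N_{n+1})=N_{n+1}|_{N_n}\cong N_n^{\oplus(n+1)}$, hence $\text{Res}([N_{n+1}])=(n+1)[N_n]$ in $K_0(N_n)$ (restriction of a projective is projective, as already recorded). Formula 1 is a direct check on the one-dimensional simple module: $L_{n+1}=N_{n+1}/I_{n+1}$ is the module on which every generator $n_i$, $1\le i\le n$, acts by zero, so restricting to the subalgebra $N_n$ generated by $n_1,\ldots,n_{n-1}$ yields the one-dimensional module on which $n_1,\ldots,n_{n-1}$ act by zero, which is precisely $L_n$; thus $\text{Res}([L_{n+1}])=[L_n]$.

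For Formula 2 I would combine the rank computation with the structure of $G_0(N_{n+1})$. Since $N_{n+1}$ is free of rank $n+1$ as a right $N_n$-module and $\dim_\CC L_n=1$, I obtain $\dim_\CC \text{Ind}(L_n)=\dim_\CC(N_{n+1}\otimes_{N_n}L_n)=n+1$. On the other hand $N_{n+1}$ has a unique simple module $L_{n+1}$, which is one-dimensional by Proposition \ref{Prop:2.11_1110}, so every finite-dimensional $N_{n+1}$-module $M$ has all of its composition factors isomorphic to $L_{n+1}$ and therefore $[M]=(\dim_\CC M)\,[L_{n+1}]$ in $G_0(N_{n+1})$. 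Applying this to $M=\text{Ind}(L_n)$ gives $\text{Ind}([L_n])=(n+1)[L_{n+1}]$.

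I expect the only substantive step to be the freeness statement, that is, the length-additivity $\ell(u c_j)=\ell(u)+\ell(c_j)$ and the consequent non-vanishing $n_u n_{c_j}=n_{u c_j}$; once this is in hand, all four identities reduce to a dimension count together with the triviality of the simple module. A convenient consistency check is compatibility with $[N_m]=m!\,[L_m]$: computing $\text{Res}([N_{n+1}])$ either as $(n+1)[N_n]=(n+1)!\,[L_n]$ or as $\text{Res}\big((n+1)!\,[L_{n+1}]\big)=(n+1)!\,[L_n]$ agrees, and likewise $\text{Ind}([N_n])$ matches $(n+1)!\,[L_{n+1}]=[N_{n+1}]$.
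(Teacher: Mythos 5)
Your proof is correct, and since the paper states this proposition only by citation to Khovanov and Savage without reproducing an argument, there is nothing in the text to diverge from; your route via the distinguished coset representatives $c_j=s_n\cdots s_{n-j+1}$, the length-additivity $\ell(uc_j)=\ell(u)+\ell(c_j)$, and the resulting freeness of $N_{n+1}$ of rank $n+1$ as a left and right $N_n$-module is exactly the standard argument in those references. The remaining deductions (the two $[N]$-formulas by freeness, $\mathrm{Res}([L_{n+1}])=[L_n]$ by triviality of the one-dimensional simple, and $\mathrm{Ind}([L_n])=(n+1)[L_{n+1}]$ by the dimension count combined with the fact that every composition factor is the unique simple $L_{n+1}$) are all sound.
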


Let us consider $R=\ZZ\text{-span of }\{ x^n/n! | n\geq 0\} $ and  $R'=\ZZ\text{-span of } \{x^n| n\geq 0\}. $ Then both $R$ and $R'$ are closed under the multiplication by $x$ and the derivation $\frac{d}{dx}.$
Consider the linear map
\[ \phi_N:  G_0(N) \to R , \qquad [L_n] \mapsto x^n/n!. \]
Then $\phi_N$ is an isomorphism and $\phi_N|_{K_0(N)}: K_0(N) \to R'$, $[N_n]\mapsto x^n$, is also an isomorphism.

\begin{prop} \label{Prop:2.15_1107} \cite{Kho,S}
Recall that the Weyl algebra is generated by $x$ and $\partial$ with defining relation $\partial x= x\partial +1.$
Then the pair $(\mathcal{N}, \{Ind, Res\})$ gives a categorification of the polynomial representation of the Weyl algebra. More precisely,
\begin{enumerate}
\item the following diagrams are commutative: \\
 \[  \begin{array}[c]{ccc}
\ G_0(N) &\stackrel{Ind, Res}{\xrightarrow{\hspace*{1cm}}} &\ G_0(N)  \\
\downarrow\scriptstyle{{\phi_N}}&&\downarrow\scriptstyle{{\phi_N} }\\
R&\stackrel{x, \partial}{\xrightarrow{\hspace*{1cm}}}&R,
\end{array}
\qquad 
 \begin{array}[c]{ccc}
\ K_0(N) &\stackrel{Ind, Res}{\xrightarrow{\hspace*{1cm}}} &\ K_0(N) \\
\downarrow\scriptstyle{{\phi_N}}&&\downarrow\scriptstyle{{\phi_N}}\\
R'&\stackrel{x, \partial}{\xrightarrow{\hspace*{1cm}}}&R',
\end{array}  \]

\item there exists an isomorphism of endofunctors of $\mathcal{N}$
\begin{equation*}
\begin{aligned}
Res\circ Ind &  \simeq Ind\circ Res \oplus Id.
\end{aligned}
\end{equation*}
\end{enumerate}
\end{prop}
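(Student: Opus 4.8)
I would treat the two assertions separately. Part (1) is a decategorified bookkeeping computation, while part (2) carries the categorical content, namely the lift of the Weyl relation $\partial x = x\partial + 1$, and is where the real work lies.

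For part (1), I would trace the generating classes through $\phi_N$ using the action of $Ind$ and $Res$ recorded in the previous proposition. On $G_0(N)$ one has $\phi_N([L_n]) = x^n/n!$; then $Ind([L_n]) = (n+1)[L_{n+1}]$ gives $\phi_N(Ind([L_n])) = (n+1)\,x^{n+1}/(n+1)! = x\cdot(x^n/n!)$, i.e.\ multiplication by $x$, while $Res([L_n]) = [L_{n-1}]$ gives $\phi_N(Res([L_n])) = x^{n-1}/(n-1)! = \partial(x^n/n!)$. The identical computation on $K_0(N)$, using $\phi_N([N_n]) = x^n$, $Ind([N_n]) = [N_{n+1}]$ and $Res([N_n]) = n[N_{n-1}]$, reproduces multiplication by $x$ and the derivation $\partial$ on $R'$. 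Hence both squares commute; this step is routine.

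For part (2), I would reduce the isomorphism of functors to a single bimodule identity. Since $Res$ is restriction of scalars along $N_n\hookrightarrow N_{n+1}$, for $M\in N_n\text{-mod}$ one has, by associativity of the tensor product, $Res\circ Ind(M) \cong {}_{N_n}(N_{n+1})_{N_n}\otimes_{N_n}M$, where ${}_{N_n}(N_{n+1})_{N_n}$ denotes $N_{n+1}$ viewed as an $(N_n,N_n)$-bimodule. The crux is the decomposition
\[ {}_{N_n}(N_{n+1})_{N_n} \;\cong\; N_n \,\oplus\, \bigl(N_n \otimes_{N_{n-1}} N_n\bigr) \]
of $(N_n,N_n)$-bimodules. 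Granting it, tensoring with $M$ over $N_n$ and again invoking associativity gives $Res\circ Ind(M)\cong M\oplus(N_n\otimes_{N_{n-1}}M)$, and the second summand is exactly $Ind_{N_{n-1}}^{N_n}Res_{N_{n-1}}^{N_n}(M) = Ind\circ Res(M)$. As every identification is natural in $M$, this yields the isomorphism of endofunctors $Res\circ Ind\simeq Ind\circ Res\oplus Id$ on each block $N_n\text{-mod}$, hence on all of $\mathcal{N}$.

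It remains to establish the bimodule decomposition, which is the main obstacle. I would exhibit the two structural maps explicitly: the inclusion $N_n\hookrightarrow N_{n+1}$, and the map $\mu\colon N_n\otimes_{N_{n-1}}N_n\to N_{n+1}$, $a\otimes b\mapsto a\,n_n\,b$. The latter is a well-defined bimodule homomorphism because $n_n$ commutes with the generators $n_1,\dots,n_{n-2}$ of $N_{n-1}$ (as $|i-n|\ge 2$ for $i\le n-2$), so $ac\otimes b$ and $a\otimes cb$ have equal images for $c\in N_{n-1}$. A dimension count is consistent, since $N_n$ is free of rank $n$ as a right $N_{n-1}$-module, whence $\dim N_n + \dim(N_n\otimes_{N_{n-1}}N_n) = n! + n\cdot n! = (n+1)! = \dim N_{n+1}$. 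It therefore suffices to show that the assembled map $\iota\oplus\mu$ is surjective, and here lies the delicate point: for each $w$ in the non-trivial parabolic double coset $\Sigma_n s_n\Sigma_n$ one must produce a length-additive factorization $w = u\,s_n\,v$ with $u,v\in\Sigma_n$, so that $n_w = n_u n_n n_v = \mu(n_u\otimes n_v)$, using that $n_a n_b = n_{ab}$ in $N_{n+1}$ whenever $\ell(ab)=\ell(a)+\ell(b)$. Verifying that the nil relations $n_i^2 = 0$ cause no unexpected collapse---equivalently, that the reduced-word basis $\{n_w\}$ of $N_{n+1}$ splits cleanly along $\Sigma_{n+1} = \Sigma_n\sqcup\Sigma_n s_n\Sigma_n$---is the heart of the argument.
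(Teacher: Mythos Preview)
Your argument is correct. Note, however, that the paper does not supply its own proof of this proposition: it is quoted from \cite{Kho,S}, and the paper explicitly says at the start of Section~\ref{Subsec:nil} that ``we refer to the paper of Khovanov \cite{Kho} and the survey paper by Savage \cite{S} for the detailed proof of propositions and theorems in Section~\ref{Subsec:nil}.'' So there is no in-paper proof to compare against.

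That said, your proof of part~(2) is precisely Khovanov's original argument: reduce to the $(N_n,N_n)$-bimodule decomposition ${}_{N_n}(N_{n+1})_{N_n}\cong N_n\oplus (N_n\otimes_{N_{n-1}}N_n)$, build the second summand via $a\otimes b\mapsto a\,n_n\,b$ using that $n_n$ centralizes $N_{n-1}$, and verify surjectivity through the double-coset splitting $\Sigma_{n+1}=\Sigma_n\sqcup\Sigma_n s_n\Sigma_n$ together with length-additive factorizations and the reduced-word basis $\{n_w\}$. Your dimension count and the naturality remark are exactly what is needed to finish. Part~(1) is, as you say, routine bookkeeping and your computations are correct.
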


\begin{cor} 
By defining the multiplication on $G_0(N)$ and $K_0(N)$ by
\[ [L_n]\cdot  [L_m]= \frac{(n+m)!}{n!\cdot m!} [L_{m+n}] \quad \text{ and } \quad [N_n]\cdot [N_m]=[N_{n+m}], \quad n,m\in \ZZ_{\geq 0}, \]
we get associative algebra isomorphisms $G_0(N) \simeq \ZZ[x]$ and $K_0(N)\simeq\ZZ[x]$. Moreover, $G_0(N)$ and $K_0(N)$ have differential algebra structures endowed with differentials induced from $Res$.
\end{cor}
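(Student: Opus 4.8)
The plan is to prove every assertion by transporting structure through the $\ZZ$-linear isomorphisms $\phi_N\colon G_0(N)\to R$ and $\phi_N|_{K_0(N)}\colon K_0(N)\to R'$ already established above, thereby reducing everything to elementary facts about the subrings $R,R'\subset\QQ[x]$. First I would note that $G_0(N)$ and $K_0(N)$ are free $\ZZ$-modules with bases $\{[L_n]\mid n\geq0\}$ and $\{[N_n]\mid n\geq0\}$, so the prescribed formulas extend uniquely and without any consistency conditions to $\ZZ$-bilinear products; the only thing to verify is that $\binom{n+m}{n}\in\ZZ$, which is automatic.

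The heart of the argument is that $\phi_N$ intertwines these products with ordinary multiplication. Since
\[ \frac{x^n}{n!}\cdot\frac{x^m}{m!}=\binom{n+m}{n}\frac{x^{n+m}}{(n+m)!} \quad\text{and}\quad x^n\cdot x^m=x^{n+m}, \]
we obtain $\phi_N(a\cdot b)=\phi_N(a)\,\phi_N(b)$ on basis elements, hence everywhere by bilinearity. Because $R'=\ZZ\text{-span of }\{x^n\}$ is literally the polynomial ring $\ZZ[x]$ with its usual product, this shows that $\phi_N|_{K_0(N)}$ is a ring isomorphism $K_0(N)\xrightarrow{\sim}\ZZ[x]$, and commutativity, associativity, and the unit $[N_0]$ are inherited from $\QQ[x]$. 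The same computation identifies $(G_0(N),\cdot)$ with $R$ as rings.

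Next I would handle the differential structure by showing that the operator $Res$ corresponds to $\frac{d}{dx}$ under $\phi_N$. From $Res([L_n])=[L_{n-1}]$ and $Res([N_n])=n[N_{n-1}]$ (with the $n=0$ terms understood to vanish) one checks on bases that
\[ \phi_N(Res\,[L_n])=\frac{x^{n-1}}{(n-1)!}=\frac{d}{dx}\frac{x^n}{n!}, \qquad \phi_N(Res\,[N_n])=n\,x^{n-1}=\frac{d}{dx}x^n, \]
so that $\phi_N\circ Res=\frac{d}{dx}\circ\phi_N$. Since $R$ and $R'$ are closed under $\frac{d}{dx}$ and $\frac{d}{dx}$ obeys the Leibniz rule on $\QQ[x]$, transporting back through the ring isomorphism $\phi_N$ shows that $Res$ is a derivation for each product, which is exactly the claim that $(G_0(N),\cdot,Res)$ and $(K_0(N),\cdot,Res)$ are differential algebras. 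Equivalently, the Leibniz rule can be verified directly on bases, where for $G_0(N)$ it collapses to Pascal's identity $\binom{n+m-1}{n-1}+\binom{n+m-1}{n}=\binom{n+m}{n}$.

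All of these computations are routine, so there is no serious obstacle; the only point needing care is the $G_0(N)$ statement. The induced product identifies $G_0(N)$ with the divided-power ring $R$, which over $\ZZ$ is not finitely generated and hence is not literally isomorphic to $\ZZ[x]$ (the honest polynomial isomorphism holds for $K_0(N)$). I would therefore read ``$G_0(N)\simeq\ZZ[x]$'' either as an isomorphism after extending scalars to $\QQ$, where $R\otimes_\ZZ\QQ=\QQ[x]=R'\otimes_\ZZ\QQ$, or simply record the integral result as $G_0(N)\simeq R$ and reserve the literal polynomial isomorphism for $K_0(N)$.
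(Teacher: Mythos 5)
Your proof is correct and follows the approach the paper intends: the corollary is stated without proof precisely because every assertion transports through the isomorphism $\phi_N$ established in the preceding proposition, exactly as you do. Your closing observation is a genuine and worthwhile refinement: integrally, $(G_0(N),\cdot)$ is the divided-power ring $R=\ZZ\text{-span}\{x^n/n!\}$, which is not finitely generated as a $\ZZ$-algebra and hence not literally isomorphic to $\ZZ[x]$, so the stated isomorphism $G_0(N)\simeq\ZZ[x]$ should be read either as $G_0(N)\simeq R$ or as holding after tensoring with $\QQ$, while the honest polynomial isomorphism is the one for $K_0(N)$.
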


\vskip 10mm

\section{ Categorification of Virasoro-Magri PVA} \label{Sec:VMPVA}

Let $\Sigma_n$-mod and $\Sigma_n$-pmod be the category of finitely generated left $\Sigma_n$-modules and the category of finitely generated projective $\Sigma_n$-modules. Let $[M]$ be the isomorphism class of $M$ in $\Sigma_n$-mod or in $\Sigma_n$-pmod.  We denote by $G_0(\Sigma_n)$ and $K_0(\Sigma_n)$ the Grothendieck group of $\Sigma_n$-mod and $\Sigma_n$-pmod. Since $\CC\Sigma_n$ is a semisimple algebra, we have
 \[K_0(\Sigma_n) = G_0(\Sigma_n) \]
and, by Theorem \ref{Thm:Specht}, the Grothendieck group of finite dimensional $\Sigma_n$-module is
\[\textstyle K_0(\Sigma_n)= \bigoplus_{\mu\vdash n }\ZZ[S^\mu]. \]

Let $\Sigma=\bigoplus_{n\in \ZZ_{\geq 0}}\CC\Sigma_n$ be the associative algebra with pairwise orthonormal idempotents $1_{\Sigma_n}$, $n\in \ZZ_{\geq 0}.$ Then any $\Sigma_n$-module becomes a $\Sigma$-module. Consider the full subcategory
\[ \textstyle \mathcal{S}= \bigoplus_{n\in \ZZ_{\geq 0}} \Sigma_n\text{-mod}\]
of the category of finite dimensional $\Sigma$-modules
and its Grothendeick group $K_0(\Sigma).$

Recall that the Virasoro-Magri PVA $\VV$ is isomorphic to the differential algebra of polynomials $\CC[\partial^n L| n\geq 0]$ generated by one variable.  Define the degree on the generators of $\VV$ by 
\[ \deg(L)=\deg(\partial) =1.\]
Then $\VV$ is a graded algebra satisfying
\[ \deg(\partial^n L)= n+1, \quad \deg(L^{m_1} (\partial L)^{m_2} \cdots (\partial^{i-1}L)^{m_i} )= \sum_{k=1}^i k m_k.\]

\begin{lem}\label{Lem:partition}
There is a one-to-one correspondence between degree $n$ monomials in $\VV$ and partitions of $n$.
\end{lem}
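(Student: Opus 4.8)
The plan is to construct an explicit bijection between the set of degree $n$ monomials in $\VV = \CC[\partial^k L \mid k \geq 0]$ and the set of partitions of $n$. First I would recall the grading: each generator $\partial^{k-1}L$ has degree $k$ (since $\deg(\partial^{k-1}L) = (k-1) + 1 = k$), so the generator of degree $k$ is $l_{k-1} := \partial^{k-1}L$ for $k \geq 1$. A monomial in $\VV$ is a product $\prod_{k\geq 1} (\partial^{k-1}L)^{m_k}$ with nonnegative integer exponents $m_k$, only finitely many nonzero, and its degree is $\sum_{k\geq 1} k\, m_k$.

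The key step is to read off the partition directly from the exponent vector. Given a degree $n$ monomial with exponent $m_k$ on the generator $\partial^{k-1}L$ of degree $k$, I would associate the partition of $n$ in which the part $k$ appears exactly $m_k$ times. Concretely, the map sends
\[
\prod_{k\geq 1} (\partial^{k-1}L)^{m_k} \;\longmapsto\; \lambda = (\,\underbrace{k,\dots,k}_{m_k},\,\dots\,),
\]
the partition having $m_k$ parts equal to $k$ for each $k$. The degree condition $\sum_k k\, m_k = n$ is precisely the statement that $\lambda$ is a partition of $n$, so the map is well-defined into partitions of $n$.

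To finish, I would check that this map is a bijection by exhibiting its inverse. Given a partition $\lambda \vdash n$, let $m_k$ be the multiplicity of the part $k$ in $\lambda$; then $\sum_k k\, m_k = n$ and the monomial $\prod_{k\geq 1}(\partial^{k-1}L)^{m_k}$ has degree $n$ and maps back to $\lambda$. Since a monomial in a polynomial algebra is uniquely determined by its exponent vector $(m_k)_{k\geq 1}$, and a partition is uniquely determined by its multiplicity sequence, these two assignments are mutually inverse. The correspondence is thus a bijection.

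I do not expect any genuine obstacle here: the statement is essentially the classical fact that partitions of $n$ are in bijection with multisets of positive integers summing to $n$, transported through the grading $\deg(\partial^{k-1}L) = k$. The only point requiring care is bookkeeping the index shift between the generator $\partial^{k-1}L$ and the part size $k$, so that the degree of a monomial matches the sum of the corresponding partition; once that is pinned down the verification is immediate.
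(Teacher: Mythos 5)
Your proposal is correct and is essentially the same argument as the paper's: the paper likewise associates the monomial $(\partial^{l_1}L)^{m_1}\cdots(\partial^{l_i}L)^{m_i}$ to the partition $((l_1+1)^{(m_1)},\ldots,(l_i+1)^{(m_i)})$, which is exactly your multiplicity-sequence bijection after the index shift $k=l+1$. Your write-up is a bit more explicit about checking degrees and exhibiting the inverse, but the underlying correspondence is identical.
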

\begin{proof}
We can associate $(\partial^{l_1} L)^{m_1} \cdots (\partial^{l_{i}}L)^{m_i}$, $l_1>l_2>\cdots>l_i\geq 0$, to the partition $((l_1+1)^{(m_1)}, (l_2+1)^{(m_2)}, \cdots, (l_i+1)^{(m_i)})$, where $l^{(m)}$ denotes $m$-tuple of $l$'s. Then it gives a one-to-one correspondence between monomials in $\VV$ and partitions.
\end{proof}

\begin{prop}
There is a one-to-one correspondence  between degree $n$ monomials in $\VV$ and irreducible representations of $\Sigma_n$.
\end{prop}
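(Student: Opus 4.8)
The plan is to combine the two one-to-one correspondences that are already available, namely the combinatorial one just established in Lemma~\ref{Lem:partition} and the representation-theoretic one recorded in Theorem~\ref{Thm:Specht}. The key observation is that both sides of the desired bijection are indexed by the same object: partitions of $n$.

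First I would invoke Lemma~\ref{Lem:partition}, which gives a bijection between degree $n$ monomials in $\VV$ and partitions $\lambda \vdash n$. Explicitly, the monomial $(\partial^{l_1}L)^{m_1}\cdots(\partial^{l_i}L)^{m_i}$ with $l_1>\cdots>l_i\geq 0$ corresponds to the partition $((l_1+1)^{(m_1)},\ldots,(l_i+1)^{(m_i)})$, and this assignment is degree-preserving since $\deg(\partial^{l_k}L)=l_k+1$ matches the part size $l_k+1$ appearing $m_k$ times. Next I would invoke Theorem~\ref{Thm:Specht}, which states that the Specht modules $\{S^\mu \mid \mu\vdash n\}$ form a complete list of mutually non-isomorphic irreducible representations of $\Sigma_n$; hence sending $\lambda \mapsto S^\lambda$ is a bijection from partitions of $n$ to (isomorphism classes of) irreducible $\Sigma_n$-representations.

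The proof then amounts to composing these two bijections: a degree $n$ monomial in $\VV$ corresponds to a partition $\lambda\vdash n$ by Lemma~\ref{Lem:partition}, and $\lambda$ corresponds to the irreducible representation $S^\lambda$ by Theorem~\ref{Thm:Specht}. Concretely, the composite sends
\[
(\partial^{l_1}L)^{m_1}\cdots(\partial^{l_i}L)^{m_i} \;\longmapsto\; S^{((l_1+1)^{(m_1)},\ldots,(l_i+1)^{(m_i)})}.
\]
Since a composition of bijections is a bijection, the claim follows immediately.

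There is essentially no obstacle here, as all the real content has been packaged into the two prior results; the only thing to verify is that the intermediate indexing set (partitions of $n$) is genuinely the same on both sides, which is immediate from the statements. The single point worth stating carefully is the degree-matching: one should confirm that the partition attached to a monomial of degree $n$ is indeed a partition \emph{of} $n$, so that it lands among the irreducibles of the correct symmetric group $\Sigma_n$ rather than some $\Sigma_m$. This is exactly the content of the identity $\deg(\partial^n L)=n+1$ together with the formula for $\deg$ on products, so I would make that compatibility explicit and then conclude.
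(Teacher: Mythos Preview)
Your proposal is correct and follows exactly the same approach as the paper, which simply notes that the result follows from Theorem~\ref{Thm:Specht} and Lemma~\ref{Lem:partition}. Your version is more detailed in spelling out the composite bijection and the degree-matching, but the underlying argument is identical.
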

\begin{proof}
It follows from Theorem \ref{Thm:Specht} and Lemma \ref{Lem:partition}.
\end{proof}

Now we would like to find functors $P^{j} Ind : \mathcal{S} \to \mathcal{S}$, $j\geq 1$, and $\bigtriangledown:\mathcal{S} \to \mathcal{S}$ which categorify the multiplication by $\partial^{j-1} L$ and the differential $\partial.$ In order to do this, we introduce the conjugate partition $\mu'\vdash n$ of $\mu\vdash n$ defined as follow:
\[ \mu'=(\mu'_1, \mu'_2, \cdots, \mu'_{l'}) \text{ and } \mu'_i=\#\{\mu_j|\mu_j \geq i\}.\]
In other words, $\mu_i$ is the number of boxes in the $i$-th row in $Y_\mu$ and $\mu'_j$ is the number of boxes in the $j$-th column in $Y_\mu.$

\begin{ex}
If $\mu=(4,2,1)$ then $\mu'=(3,2,1,1).$
\[Y_\mu=
\yng(4,2,1)
\qquad \text{ and } \qquad
Y_{\mu'}=
\yng(3,2,1,1). \]
\end{ex}

%\[\ydiagram{2,2,1}\]

%\[\ydiagram{2+2,1+2,2,1}\]

%\[
%\ytableausetup{centertableaux}
%\begin{ytableau}
%a & d & f \\
%b & e & g \\
%c
%\end{ytableau}
%\]
Recall that we have the induction functor from $\Sigma_n$-mod to  $\Sigma_{n+1}$-mod. Using the induction functor we construct the endofunctors $P^jInd$ and $\bigtriangledown$ on $\mathcal{S}$.

\begin{lem} Let $\mu\vdash n$ and  $\mu'=(\mu'_1, \cdots, \mu'_l)$ be the conjugate partition of $\mu$. Let  $Y_{\nu^i}$, $i\in \ZZ_{>0}$, be the collection of $n+1$ boxes  which is  arranged in  top-justified columns with $\mu'_j + \delta_{ij}$ boxes in the $j$-th column. Here we assume $\mu_j'=0$ for $j>l$.
\begin{enumerate}
\item Then \[Ind_{n}^{n+1} S^\mu = S^{\nu^1} \oplus \cdots \oplus S^{\nu^{l+1}},\] where
\begin{equation*}
S^{\nu^i} = \left\{
\begin{array}{ll}
\text{ Specht module of shape $\nu^i$ } & \text{ if } i=1 \text{ or }  \mu'_{i-1}> \mu'_i, \\
\qquad 0  & \text{ otherwise. }
\end{array}
\right.
\end{equation*}
\item
Consider the functor  
\[ p_i Ind_n^{n+1} : \Sigma_n\text{-mod} \to \Sigma_{n+1}\text{-mod}\]
for $i\geq 1$ such that  $ S^\mu \mapsto S^{\nu^i}$ if $i=1, \cdots, l+1$ and $S^\mu \mapsto 0$ if $i\geq l+1$.
Then for $n\in \ZZ_{\geq 0}$ and $j\in \ZZ_{>0}$, the functor
\begin{equation} \label{Eqn:P^j}
 P^j Ind_{n}^{n+j}= p_j Ind_{n+j-1}^{n+j}\circ \cdots \circ p_1 Ind_n^{n+1}: \Sigma_n\text{-mod} \to \Sigma_{n+j}\text{-mod},
 \end{equation}
maps  $S^\mu$ to  $S^\nu$, where the conjugate partition of $\nu$ is
\begin{equation*}
 \nu'=\left\{
 \begin{array}{ll}
 (\mu'_1+1, \mu'_2+1, \cdots, \mu'_j+1, \mu'_{j+1}, \cdots, \mu'_l)\vdash n+j &  \text{ if } \quad j\leq l, \\
 (\mu'_1+1, \mu'_2+1, \cdots, \mu'_l+1, 1, \cdots, 1)\vdash n+j & \text{ if } \quad  j>l.
 \end{array}
 \right.
 \end{equation*}
 \end{enumerate}
 \end{lem}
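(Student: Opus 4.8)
The plan is to establish part (1) first, since part (2) follows by an inductive application of (1). For part (1), I would recall the branching rule (Proposition \ref{Thm:branching}), which states that $\mathrm{Ind}_n^{n+1} S^\mu = \bigoplus_{\Box} S^{\mu \leftarrow \Box}$, summing over all Young diagrams obtained from $\mu$ by adding a single box. The main task is to reconcile this row-based description with the column-based bookkeeping in the statement. The key observation is that describing the diagram by its conjugate partition $\mu'$ (so that $\mu'_j$ counts boxes in the $j$-th column) turns ``adding a box to $\mu$'' into ``increasing some $\mu'_j$ by one.'' A box can legally be added to column $j$ precisely when the result is still a partition, i.e.\ when $j=1$ or $\mu'_{j-1} > \mu'_j$; this is exactly the nonvanishing condition for $S^{\nu^i}$ in the statement. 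I would verify that $Y_{\nu^i}$, as defined with $\mu'_j + \delta_{ij}$ boxes in the $j$-th column, is a valid Young diagram exactly under this condition, and that these exhaust all addable boxes, matching the branching rule term by term.

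For part (2), I would argue by induction on $j$. The functor $p_i\mathrm{Ind}_n^{n+1}$ is defined to pick out the single summand $S^{\nu^i}$ from the branching decomposition, i.e.\ it adds a box to the $i$-th column when legal and returns $0$ otherwise. The composite $P^j\mathrm{Ind}_n^{n+j}$ applies $p_1, p_2, \dots, p_j$ in succession. The plan is to track what happens to the conjugate partition at each stage: applying $p_1$ adds a box to the first column (always legal, since adding to the first column of any diagram is permitted), producing $(\mu'_1+1, \mu'_2, \dots)$. I would then show inductively that after applying $p_1, \dots, p_{k}$ one reaches the conjugate partition $(\mu'_1+1, \dots, \mu'_k+1, \mu'_{k+1}, \dots)$, so that the subsequent $p_{k+1}$ adds a box to the $(k+1)$-th column. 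The point requiring care is that this addition is legal at each step: after the box has been placed in column $k$, the entry there is $\mu'_k+1$, and the entry in column $k+1$ is still $\mu'_{k+1}$, so legality of the $(k{+}1)$-th step reduces to $\mu'_k + 1 > \mu'_{k+1}$, equivalently $\mu'_k \geq \mu'_{k+1}$, which holds automatically because $\mu'$ is a partition. This is precisely why the composite never vanishes for $j \leq l$ and continues past $l$ by appending columns of height $1$, giving the two cases in the final formula.

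The main obstacle, and the step I would be most careful about, is verifying the legality of each successive box-addition in the composite and confirming that $p_{k+1}$ indeed lands in the $(k+1)$-th column rather than being diverted or annihilated. Concretely, I must check that after the first $k$ steps the relevant column heights are strictly decreasing in the right places so that the unique nonzero summand selected by $p_{k+1}$ is the one claimed. The weak monotonicity $\mu'_k \ge \mu'_{k+1}$ of a conjugate partition is exactly what guarantees $\mu'_k + 1 > \mu'_{k+1}$ after the $k$-th box is added, so the induction closes cleanly; I would present this inequality as the crux of the argument. The boundary behavior when $j$ exceeds $l$ (where the original columns are exhausted and one begins creating new single-box columns) should be handled as a separate but entirely analogous check, using the convention $\mu'_j = 0$ for $j > l$ stated in the hypothesis.
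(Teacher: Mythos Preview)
Your proposal is correct and follows essentially the same route as the paper. The paper dispatches part (1) in one line by invoking the branching rule (Proposition~\ref{Thm:branching}), and proves part (2) by induction on the number of steps, observing exactly as you do that after adding boxes to columns $1,\dots,i-1$ the intermediate conjugate partition satisfies $\nu'_{i-1}=\mu'_{i-1}+1>\mu'_i=\nu'_i$ because $\mu'$ is a partition, so the $i$-th box addition is legal; your write-up simply spells out in more detail the same inequality the paper uses as the crux.
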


 \begin{proof}
 (1) It follows immediately from Theorem \ref{Thm:branching}. \\
 (2) It can be proved by an induction. Let  $i>1$ and $\nu'=  (\mu'_1+1, \mu'_2+1, \cdots, \mu'_{i-1}+1, \mu'_{i}, \cdots, \mu'_l)$. Since $\mu'$ is a partition, $\nu'$ is a partition satisfying  $\nu'_{i-1}> \nu'_i$. Hence
 \[ p_i Ind_{n+i-1}^{n+i}\circ \cdots \circ p_1 Ind_n^{n+1} S^\mu= p_i Ind_{n+i-1}^{n+i} S^\nu= S^{\nu^+}, \]
 where $\nu$ is the conjugate partition of $\nu'$ and the conjugate partition of $\nu^+$ is $(\mu'_1+1, \mu'_2+1, \cdots, \mu'_{i}+1, \mu'_{i+1}, \cdots, \mu'_l).$
 \end{proof}

\begin{ex}
Let $\mu=(5,2,1)\vdash n=8$ and $j=4$. Then $P^j Ind_{n}^{n+j} S^\mu=S^\nu$, where
\[
Y_\mu=
\yng(5,2,1) \quad \text{ and } \quad
Y_\nu=
\young(\ \ \ \ \ ,\ \ \circ\circ,\ \circ,\circ) .\]
Here the boxes in $Y_\nu$ with circles are the boxes in $Y_\nu \backslash Y_\mu.$
\end{ex}

\begin{prop}
Let 
\begin{equation} \label{Eqn: P^j-}
P^j Ind = \bigoplus_{n\in \ZZ_{>0}}P^j Ind_{n}^{n+j}: \mathcal{S} \to \mathcal{S}.
\end{equation}
Then, for $\mu\vdash n$, we have  $P^j Ind (S^\mu)=S^\nu$, where the Young diagram $Y_\nu$ is obtained by inserting a row with $j$-boxes into  $Y_\mu$.
\end{prop}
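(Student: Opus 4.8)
The plan is to read off this statement from the preceding lemma, which already identifies $P^j Ind_n^{n+j}(S^\mu)$ with a Specht module $S^\nu$ described through the conjugate partition $\nu'$, and then to translate that conjugate description into the direct (row) description of Young diagrams.

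First I would pin down the meaning of ``inserting a row with $j$ boxes into $Y_\mu$'': for $\mu \vdash n$ this is the partition $\nu \vdash n+j$ whose multiset of parts is that of $\mu$ together with one extra part equal to $j$, re-sorted into weakly decreasing order; equivalently $Y_\nu$ is obtained by adjoining a new row of length $j$ at the unique position that keeps the row lengths weakly decreasing.

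The combinatorial heart of the argument is then to record how this operation acts on conjugate partitions. Since $\mu'_i$ counts the parts of $\mu$ of size $\ge i$, adjoining a single part equal to $j$ raises this count by exactly $1$ for each $i$ with $1 \le i \le j$ and leaves it unchanged for $i > j$. Thus, adopting the convention $\mu'_i = 0$ for $i > l$, the inserted partition satisfies
\[ \nu'_i = \mu'_i + 1 \quad (1 \le i \le j), \qquad \nu'_i = \mu'_i \quad (i > j). \]
I would then compare this with the output of the preceding lemma: using the same zero-padding convention, its two cases $j \le l$ and $j > l$ collapse into precisely this single identity. As conjugation is an involution on partitions, the two prescriptions yield the same $\nu$, which settles the claim for each fixed $n$. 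Finally, since $P^j Ind = \bigoplus_n P^j Ind_n^{n+j}$ acts on the summand $\Sigma_n$-mod of $\mathcal{S}$ exactly by $P^j Ind_n^{n+j}$, the statement for the endofunctor on $\mathcal{S}$ follows immediately.

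I do not anticipate a real obstacle; the only point demanding care is the boundary bookkeeping --- verifying that the uniform formula for $\nu'$ genuinely subsumes both cases of the lemma once $\mu'_i = 0$ for $i > l$ is imposed, and confirming the edge case in which $j > \mu_1$, so that the inserted row becomes the top row of $Y_\nu$.
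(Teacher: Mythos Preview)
Your proposal is correct and follows essentially the same approach as the paper: both arguments translate between the conjugate-partition description of $\nu$ given by the preceding lemma and the operation of inserting a row of length $j$ into $Y_\mu$. The paper runs the translation in the opposite direction---starting from the lemma's $\nu'$, reading off the row lengths $\nu_i$ explicitly (via a $\min$ formula), and then recognizing the result as a row insertion---whereas you start from the row insertion and compute its conjugate; the content is the same.
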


\begin{proof}
Let $\mu=(\mu_1, \mu_2, \cdots, \mu_k)$ and $\nu=(\nu_1, \nu_2, \cdots, \nu_{l})$. Then
\[ l=k+1, \quad \nu_i= \mu_i+\text{ min} (\mu_{i-1}-\mu_i, j-\mu_i) \text{ if } \mu_i<j,  \quad \text{ and } \nu_i=\mu_i \text{ if } \mu_i\geq j.\]
In other words,
\[ \nu_{i+1}=\mu_i \text{ if } \mu_i<j, \quad \nu_i= \mu_i \text{ if } \mu_i\geq j\]
and $\nu_i=j$, where $i$ satisfies $\mu_i<j$ and $\mu_{i-1}\geq j.$ Hence $Y_\nu$ can be obtained by adding a row with $j$ boxes to $Y_\mu.$
\end{proof}

We define another functor $\bigtriangledown: \mathcal{S}\to \mathcal{S}$ as follows. Any partition $\mu\vdash n$ can be written as
\[\textstyle \mu= (\mu_1^{(n_1)}, \mu_2^{(n_2)}, \cdots, \mu_k^{(n_k)}), \quad \text{ where }\sum_{i=1}^k n_i \mu_i=n   \]
for $n_i\in \ZZ_{>0}$ and $i=1, \cdots, k$.
Here $\mu_i^{(n_i)}$ denotes  $n_i$-tuple of the number $\mu_i$ and we assume $\mu_i>\mu_{i+1}$  and  $\mu_{k+1}=0.$ Then the induction functor in Theorem \ref{Thm:branching} satisfies
\[ Ind_n^{n+1} S^\mu= S^{\nu^1} \oplus \cdots \oplus S^{\nu^{k+1}},\]
where
\begin{equation*}
\begin{aligned}
& \nu^{i}= (\mu_1^{(n_1)}, \cdots, \mu_{i-1}^{(n_{i-1})}, \mu_i+1, \mu_i^{(n_i-1)}, \mu_{i+1}^{(n_{i+1})}, \cdots, \mu_k^{(n_k)}) \text{  for } i=1, \cdots, k, \\
& \nu^{k+1}=(\mu_1^{(n_1)}, \mu_2^{(n_2)}, \cdots, \mu_k^{(n_k)}, 1).
\end{aligned}
\end{equation*}

For $i\in \ZZ_{>0}$, define the functors 
\[ q_i Ind_{n}^{n+1}: \Sigma_n\text{-mod} \to \Sigma_{n+1}\text{-mod},\qquad S^\mu \mapsto
\left\{ \begin{array}{cc}
S^{\nu^i} &  \text{ if } i=1, \cdots, k, \\
 0 & \text{ otherwise}
 \end{array}\right.
 \]
and 
\begin{equation} \label{Eqn:par}
 \bigtriangledown_n^{n+1}: \Sigma_n\text{-mod} \to  \Sigma_{n+1}\text{-mod}, \qquad S^\mu \mapsto \bigoplus_{i=1}^k\left[ \bigoplus_{j=1}^{n_i}  q_i Ind_n^{n+1} S^\mu \right]= \bigoplus_{i=1}^k (S^{\nu^i})^{n_i}.
 \end{equation}
 Write the direct sum of the functors  by
 \begin{equation} \label{Eqn:par-}
  \bigtriangledown= \bigoplus_{n\in \ZZ_{>0}} \bigtriangledown_n^{n+1}=\mathcal{S}\to \mathcal{S}.
  \end{equation}

Let $\VV_\ZZ= \ZZ[L, \partial L, \partial^2 L, \cdots]$ be the integral form of $\VV$ given in Remark \ref{VM_Z}(2) and let
 \[ \phi_\Sigma:   K_0(\Sigma)\to \VV_\ZZ ,  \quad [S^\mu]\mapsto  (\partial^{l_1} L)^{m_1} \cdots (\partial^{l_{i}}L)^{m_i} ,\]
be the $\ZZ$-linear map where $l_1>l_2>\cdots>l_i\geq 0$ and $\mu= ((l_1+1)^{(m_1)}, (l_2+1)^{(m_2)}, \cdots, (l_i+1)^{(m_i)}).$ Then we have the following theorem.

\begin{thm} \label{Thm:3.3}
Let $P^j Ind, \ \bigtriangledown: K_0(\Sigma)\to K_0(\Sigma)$ be the $\ZZ$-linear maps induced from the endofunctors  $P^j Ind, \ \bigtriangledown:\mathcal{S}\to \mathcal{S}$. Then the following statements are true.
\begin{enumerate}
\item
The following diagrams are commutative:
\begin{equation}
\begin{array}[c]{ccc}
 K_0(\Sigma) &\stackrel{P^j \Ind }{\longrightarrow}& K_0(\Sigma)\\
\downarrow\scriptstyle{\phi_S}&&\downarrow\scriptstyle{\phi_S}\\
\VV_\ZZ&\stackrel{\cdot\, \partial^{j-1}L}{\longrightarrow}&\VV_\ZZ
\end{array} \qquad, 
\qquad
\begin{array}[c]{ccc}
 K_0(\Sigma) &\stackrel{\bigtriangledown}{\longrightarrow}& K_0(\Sigma)\\
\downarrow\scriptstyle{\phi_S}&&\downarrow\scriptstyle{\phi_S}\\
\VV_\ZZ&\stackrel{\partial}{\longrightarrow}&\VV_\ZZ
\end{array}
\end{equation}

\item
We have
\[ \bigtriangledown \circ  P^j Ind = P^{j+1} Ind \oplus P^j Ind \circ \bigtriangledown.\]
\end{enumerate}
\end{thm}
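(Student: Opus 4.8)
The plan is to establish parts (1) and (2) in turn, deducing (2) from (1). The whole argument rests on a single translation dictionary: under $\phi_\Sigma$ the class $[S^\mu]$ is sent to a monomial whose factors $\partial^{l}L$ record precisely the rows of length $l+1$ in $Y_\mu$ (this is Lemma \ref{Lem:partition}). Consequently every combinatorial operation on Young diagrams corresponds to an explicit algebraic operation on monomials in $\VV_\ZZ$, and I would verify both squares of (1) by making this correspondence explicit, then read off (2) as an operator identity.

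For the left square I would invoke the Proposition immediately preceding the theorem, which asserts $P^j\Ind(S^\mu)=S^\nu$ with $Y_\nu$ obtained from $Y_\mu$ by inserting a single row of length $j$. Since a row of length $j$ is exactly the factor $\partial^{j-1}L$, inserting one more such row multiplies the associated monomial by $\partial^{j-1}L$, regardless of whether $Y_\mu$ already contained rows of length $j$. Hence $\phi_\Sigma(P^j\Ind[S^\mu])=\phi_\Sigma([S^\mu])\cdot\partial^{j-1}L$, which is commutativity of the left square directly.

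For the right square I would unwind the definition \eqref{Eqn:par} of $\bigtriangledown$. Writing $\mu=(\mu_1^{(n_1)},\dots,\mu_k^{(n_k)})$ with distinct parts $\mu_i$ of multiplicity $n_i$, the functor yields $\bigoplus_{i=1}^k (S^{\nu^i})^{n_i}$, where $\nu^i$ extends one $\mu_i$-row to length $\mu_i+1$. On monomials this is exactly the Leibniz rule: $\partial\bigl(\prod_i(\partial^{\mu_i-1}L)^{n_i}\bigr)=\sum_i n_i\,(\partial^{\mu_i}L)(\partial^{\mu_i-1}L)^{n_i-1}\prod_{i'\neq i}(\partial^{\mu_{i'}-1}L)^{n_{i'}}$, and the Leibniz coefficient $n_i$ matches the multiplicity $n_i$ attached to $S^{\nu^i}$. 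The one point genuinely requiring care, and the main obstacle in part (1), is precisely this matching of the derivation coefficient $n_i$ with the multiplicity in \eqref{Eqn:par}; once it is checked, the right square commutes.

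Finally, for (2) I would observe that $\phi_\Sigma$ is a $\ZZ$-module isomorphism, so it suffices to verify the induced identity on $\VV_\ZZ$. Translating $P^j\Ind$ into multiplication by $\partial^{j-1}L$ and $\bigtriangledown$ into $\partial$ via part (1), the relation $\bigtriangledown\circ P^j\Ind=P^{j+1}\Ind + P^j\Ind\circ\bigtriangledown$ becomes $\partial(\partial^{j-1}L\cdot f)=\partial^{j}L\cdot f+\partial^{j-1}L\cdot\partial f$ for all $f\in\VV_\ZZ$, which is just the derivation property of $\partial$ and holds immediately. Should the sharper functor-level isomorphism be wanted, one can instead compare Young diagrams constituent by constituent; there I expect the crux to be the case $j=\mu_p$, in which the diagram $P^{j+1}\Ind(S^\mu)$ coincides with the $i=p$ constituent of $P^j\Ind\circ\bigtriangledown(S^\mu)$, so that one must confirm the two groupings contribute $P^{j+1}\Ind(S^\mu)$ with matching total multiplicities $n_p+1$ and $1+n_p$.
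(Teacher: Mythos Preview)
Your argument for part~(1) is essentially the paper's: both squares are checked on basis classes $[S^\mu]$ by translating the combinatorics of $P^j\Ind$ and $\bigtriangledown$ on Young diagrams into the algebra of monomials in $\VV_\ZZ$, with the right square reducing to the Leibniz rule and the matching of the multiplicity $n_i$.

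For part~(2) you take a different route. The paper proves the functor identity by a direct combinatorial comparison: it writes $\mu=(\mu_1^{(n_1)},\dots,\mu_k^{(n_k)})$, computes $\bigtriangledown\circ P^j\Ind[S^\mu]$ and $P^j\Ind\circ\bigtriangledown[S^\mu]$ explicitly as sums of Specht classes, and matches terms. You instead deduce~(2) from~(1): since $\phi_\Sigma$ is a $\ZZ$-module isomorphism, the desired relation is equivalent to $\partial(\partial^{j-1}L\cdot f)=\partial^jL\cdot f+\partial^{j-1}L\cdot\partial f$, which is the derivation rule. This is cleaner and makes the role of~(1) transparent; it yields the identity in $K_0(\Sigma)$, and since $\mathcal{S}$ is semisimple this lifts to the functor isomorphism with~$\oplus$. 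Your remark about the direct approach is also apt: you correctly isolate the overlap case $j=\mu_p$, where $P^{j+1}\Ind(S^\mu)$ and the $p$-th constituent of $P^j\Ind\circ\bigtriangledown(S^\mu)$ have the same shape and the multiplicities $1+n_p$ and $(n_p+1)$ must be reconciled---a point the paper's write-up passes over without comment.
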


\begin{proof}
(1)  We claim that $\phi_\Sigma \circ P^j  Ind=  \cdot \partial^{j-1}L\circ \phi_\Sigma$. Let $\mu=(\mu_1, \cdots, \mu_l)$ and $\mu_{i-1}>j\geq\mu_i$. Then $P^j Ind [S^\mu]= S^\nu$ where $\nu=(\mu_1, \cdots, \mu_{i-1}, j, \mu_i, \cdots, \mu_l)$ and
\[\phi_\Sigma \circ P^j Ind [S^\mu]= \partial^{\mu_1-1} L \cdots \partial^{\mu_{i-1}-1}L\cdot \partial^{j-1}L \cdot \partial^{i-1}L \cdots \partial^{\mu_l-1}L.\]
On the other hand,
\[\cdot \partial^{j-1}L\circ \phi_\Sigma[S^\mu]=  \partial^{j-1}L\cdot   \partial^{\mu_1-1} L \cdots \partial^{\mu_l-1}L.\]
Since $\VV$ is a commutative algebra, the first diagram commutes.

 For the second commutative diagram, it is enough to check for the irreducible module $S^\mu.$ Let $\mu=(\mu_1^{(n_1)}, \cdots \mu_l^{(n_l)}).$ Then we can see that
\[ \phi_\Sigma \circ \bigtriangledown[S^\mu]=\partial\circ \phi_\Sigma [S^\mu]= \sum_{i=1}^{l}   \left[
n_i\ \partial^{\mu_i} L \cdot  (\partial^{\mu_i-1}L)^{n_i-1} \cdot   \prod_{k=1, \cdots, l, \, k\neq i}  (\partial^{\mu_k-1}L)^{n_k}\right].\]

(2) Let $\mu= (\mu^{(n_1)}, \cdots, \mu_{k}^{(n_{k})}),$ where $\mu_{i-1}>j$ and $\mu_i\leq j.$ Then
\[P^j Ind [S^\mu]= [S^{\overline{\mu}}] \]
where $\overline{\mu}=(\mu^{(n_1)}, \cdots,\mu_{i-1}^{(n_{i-1})}, j, \mu_i^{(n_i)}, \cdots \mu_{k}^{(n_{k})})$ and
\[ \bigtriangledown \circ P^j Ind[S^\mu] = \sum_{l=1}^k n_l [S^{\mu^l}]+[ S^\nu ]\]
where $\mu^l= (\mu_1^{(n_1)}, \cdots, \mu_{i-1}^{(n_{i-1})}, j, \mu_i^{(n_i)}, \cdots, \mu_{l-1}^{(n_{l-1})}, \mu_l+1, \mu_l^{(n_l-1)}, \mu_{l+1}^{(n_{l+1})} \cdots, \mu_k^{(n_k)})$ and $\nu=(\mu^{(n_1)}, \cdots,\mu_{i-1}^{(n_{i-1})}, j+1, \mu_i^{(n_i)}, \cdots \mu_{k}^{(n_{k}})).$\\
On the other hand,
\[ P^j Ind \circ \bigtriangledown[S^\mu]= n_l [S^{\mu^l}], \qquad P^{j+1} Ind [S^\mu]= [S^\nu].\]
Hence we proved the theorem.
\end{proof}

Now we are ready to state and prove the main theorem of this paper.

\begin{thm} \label{Thm:3.8_1106}
Let  $\mu= (\cdots, 3^{(n_3)}, 2^{(n_2)}, 1^{(n_1)})\vdash n$ and $\nu=  (\cdots, 3^{(m_3)}, 2^{(m_2)}, 1^{(m_1)})\vdash m$ where $k^{(n_k)}$ means $k$ appears $n_k$ times in the partition. Define the multiplication on $K_0(\Sigma)$ by 
\[ [S^\mu] \cdot [S^\nu] = [S^{\mu\cup \nu}], \]
where $\mu \cup \nu= (\cdots, 3^{(n_3+m_3)}, 2^{(n_2+m_2)}, 1^{(n_1+m_1)})$.
\begin{enumerate}
\item Then algebra $K_0(\Sigma)$ with the differential $\bigtriangledown$ is isomorphic to the differential algebra $\ZZ[\partial^n L\, |\, n \in \ZZ_{\geq 0}].$
\item Assume that the central charge $c$ is an integer. Then $K_0(\Sigma)$ is endowed with a $\lambda$-bracket and the map $\phi_{\Sigma}$ can be extended to an isomorphism $K(\Sigma)_\CC\to \VV$ as Lie conformal algebras.
\end{enumerate}
\end{thm}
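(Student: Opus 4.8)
The plan is to handle the two parts in sequence, using Theorem \ref{Thm:3.3} as the engine for the differential-algebra structure and Remark \ref{VM_Z}(2) for the integrality of the $\lambda$-bracket. For part (1), I would first note that $\phi_\Sigma$ is a $\ZZ$-module isomorphism: by Theorem \ref{Thm:Specht} the classes $[S^\mu]$ form a $\ZZ$-basis of $K_0(\Sigma)$ indexed by all partitions, while by Lemma \ref{Lem:partition} the monomials $(\partial^{l_1}L)^{m_1}\cdots(\partial^{l_i}L)^{m_i}$ form a $\ZZ$-basis of $\VV_\ZZ$ indexed by the same set, and $\phi_\Sigma$ is exactly the induced bijection. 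Next I would verify multiplicativity directly: writing a partition as $(\cdots,2^{(n_2)},1^{(n_1)})$, its image under $\phi_\Sigma$ is $\prod_k (\partial^{k-1}L)^{n_k}$, so the rule $\mu\cup\nu$, which adds the multiplicities $n_k+m_k$, corresponds precisely to multiplying monomials, giving $\phi_\Sigma([S^\mu]\cdot[S^\nu])=\phi_\Sigma([S^\mu])\,\phi_\Sigma([S^\nu])$. Finally, the compatibility of $\bigtriangledown$ with $\partial$ is exactly the second commutative diagram of Theorem \ref{Thm:3.3}(1), namely $\phi_\Sigma\circ\bigtriangledown=\partial\circ\phi_\Sigma$; combined with multiplicativity this shows that $\bigtriangledown$ is a derivation and that $\phi_\Sigma$ is a differential-algebra isomorphism.

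For part (2), the strategy is to transport the $\lambda$-bracket from $\VV_\ZZ$ to $K_0(\Sigma)$ along the isomorphism of part (1). Concretely I would set
\[
\{[S^\mu]_\lambda [S^\nu]\} := \phi_\Sigma^{-1}\bigl(\{\phi_\Sigma([S^\mu])_\lambda\, \phi_\Sigma([S^\nu])\}\bigr),
\]
which is meaningful precisely when the right-hand side lands in $\VV_\ZZ[\lambda]$. This integrality is the crux of the argument and is where the hypothesis $c\in\ZZ$ enters. By Example \ref{VM-lambda} the coefficient of $\lambda^j$ in the bracket of generators is $\tfrac{1}{j!}(\partial^m L)_{(j)}(\partial^n L)$, which equals $2L$ in the top non-central range, equals a $\binom{n}{j-m}+2\binom{n}{j-m-1}$ combination of $\partial^{n-j+1}L$ in the middle range, and equals $\partial^{n+1}L$ at $j=m$, all of which are integral, while the coefficient of $\lambda^{m+n+3}$ is exactly $c$. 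Since the bracket of arbitrary elements is generated from this basic bracket through sesqui-linearity and the Leibniz rule (the master formula, cf. \eqref{VMgen}), and since $\VV_\ZZ$ is a subring closed under $\partial$, the full bracket preserves $\VV_\ZZ$ if and only if $c\in\ZZ$, as recorded in Remark \ref{VM_Z}(2). Granting this, the transported bracket is a well-defined $\lambda$-bracket on $K_0(\Sigma)$.

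It then remains to extend $\phi_\Sigma$ $\CC$-linearly to $K_0(\Sigma)_\CC\to\VV=\VV_\ZZ\otimes_\ZZ\CC$ and conclude. Since $\phi_\Sigma$ is a $\ZZ$-linear bijection its extension is a $\CC$-linear bijection; by part (1) it intertwines the $\CC[\partial]$-module structures, and by construction it intertwines the $\lambda$-brackets. The Lie conformal algebra axioms of sesqui-linearity, skew-symmetry, and the Jacobi identity hold on $\VV$, and a $\partial$- and bracket-preserving bijection transports them verbatim, so $\phi_\Sigma$ is an isomorphism of Lie conformal algebras. I expect the only genuine obstacle to be the integrality verification above; everything else is formal once Theorem \ref{Thm:3.3} is available. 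One could alternatively bypass the transport and define the $m$-th products on $K_0(\Sigma)$ intrinsically via the master formula applied to the categorified generators $\bigtriangledown$ and $P^jInd$, but the bookkeeping is identical and the same condition $c\in\ZZ$ governs well-definedness.
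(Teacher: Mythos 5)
Your proposal is correct and follows essentially the same route as the paper: establish that $\phi_\Sigma$ is a bijective algebra isomorphism intertwining $\bigtriangledown$ with $\partial$ via Theorem \ref{Thm:3.3}, then transport the $\lambda$-bracket using the integrality guaranteed by Remark \ref{VM_Z}(2) when $c\in\ZZ$. Your explicit verification of the integrality of the coefficients from Example \ref{VM-lambda} is slightly more detailed than the paper's appeal to the remark, but the argument is the same.
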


\begin{proof}
By Remark \ref{VM_Z}, $\VV_\ZZ$ has the Virasoro-Magri PVA structure provided that $c$ is an integer. Since each $n$-th bracket, $n\in \ZZ_{\geq 0}$,  can be obtained by multiplications and differentials of generators, we naturally get the $n$-th bracket on $K_0(\Sigma)$ using Theorem \ref{Thm:3.3}.  Consequently, if $K_0(\Sigma)$ and $\VV_\ZZ$ are isomorphic as differential algebras, then $K_0(\Sigma)_\CC$ and $\VV$ are isomorphic as PVAs.

Since we proved that $\phi_\Sigma\circ \bigtriangledown = \partial \circ \phi_\Sigma$, it is enough to show that $\phi_\Sigma$ is an algebra isomorphism. Indeed we have already proved that $\phi_\Sigma$ is bijective and, for $\mu= (\cdots, 3^{(n_3)}, 2^{(n_2)}, 1^{(n_1)})$ and $\nu= (\cdots, 3^{(m_3)}, 2^{(m_2)}, 1^{(m_1)}),$ we have \[\phi_\Sigma([S^\mu]\cdot[S^\nu])= \phi_\Sigma[S^\mu]\cdot \phi_\Sigma[S^\nu]= \prod_{N\in \ZZ_{>0}} (\partial^{N-1} L)^{n_N+m_N}.\] We note that bijectivity of $\phi_\Sigma$ guarantees the Leibniz rule $\bigtriangledown ([S^\mu]\cdot[S^\nu] )= (\bigtriangledown [S^\mu] )\cdot [S^\nu]+ [S^\mu]\cdot  (\bigtriangledown [S^\nu]).$
\end{proof}

\section{ The relation between $K_0(\Sigma)$ and $K_0(N)$} \label{Sec:app}

As we have seen in (\ref{Eqn:Sn}) and (\ref{Eqn:Nn}), the nil-Coxeter algebra is a degenerate homogeneous version of the symmetric group algebra. Thus we can expect $K_0(\Sigma)$ has richer structure than $K_0(N).$ In this section, we show that the Poisson algebra structure and the differential algebra structure on $K_0(N)$ are induced from the PVA structure and the differential algebra structure on $K_0(\Sigma)$.

%%As we see in (\ref{Eqn:Sn}) and (\ref{Eqn:Nn}), an algebra of symmetric group $\CC\Sigma_n$ and a nil-Coxeter algebra $N_n$ have similar definitions. The only difference is $\CC\Sigma_n$ has non-homogeneous relations
%$s_i^2=id$, $i=1, \cdots, n-1$, and $N_n$ has homogeneous relations $n_i^2=0$ instead. Due to this difference, the Grothendieck group $K_0(\Sigma)$ has more structures than  $K_0(N)$. In this section, we show a Poisson algebra and a differential algebra structures on $K_0(N)$ can be induced from the PVA  and the differential algebra structures of $K_0(\Sigma).$ 

\subsection{Poisson algebra structure on $K_0(N)$} 
In this section, we focus on the integral form of PVAs and Poisson algebras.   To this end, we assume the central charge $c$ of the Virasoro-Magri PVA $\VV$ is an integer. Also, we define PVA isomorphisms and Poisson algebra isomorphism between integral forms of PVAs and Poisson algebras, respectively.

\begin{defn}\ 
\begin{enumerate}
\item
Let $\VV^1$, $\VV^2$ be PVAs and $\Phi: \VV^1 \to \VV^2$ be a PVA homomorphism. If the $\lambda$-brackets of $\VV^1$ and $\VV^2$ are well-defined on the integral forms $\VV^1_\ZZ$ and $\VV^2_\ZZ$ and 
\[ \Phi|_{\VV^1_\ZZ}: \VV^1_\ZZ \to \VV^2_\ZZ\]
is bijective, then we say $\Phi|_{\VV^1_\ZZ}$ is a {\it PVA isomorphism} between $\VV^1_\ZZ$ and $\VV^2_\ZZ$. 
\item
Let $P^1$, $P^2$ be Poisson algebras and $\Psi: P^1 \to P^2$ be a PVA homomorphism. If the Poisson brackets of $P^1$ and $P^2$ are well-defined on the integral forms $P^1_\ZZ$ and $P^2_\ZZ$ and 
\[ \Psi|_{P^1_\ZZ}: P^1_\ZZ \to P^2_\ZZ\]
is bijective, then we say $\Psi|_{P^1_\ZZ}$ is a {\it Poisson algebra isomorphism} between $P^1_\ZZ$ and $P^2_\ZZ$. 
\end{enumerate}
\end{defn}

Since we have an energy momentum field $L$ in $\VV$, the operator $H=L_{0}$ is Hamiltonian. Hence there is the $H$-twisted Zhu algebra $Zhu_H(\VV)$ which is isomorphic to $\CC[L]$ by Proposition \ref{Prop:zhu}.

In the previous section, we showed that $K_0(\Sigma)$ is isomorphic to $\VV_\ZZ$ as a PVA and $K_0(N)$ is isomorphic to $\ZZ[x]$ as Poisson algebras with trivial Poisson brackets.  Since $\CC\otimes \ZZ[x]\simeq Zhu_H(\VV)$, the Grothendieck group $K_0(\Sigma)$ can be considered as a chiralization of $K_0(N)$. We summarize the above observation as a theorem below.

\begin{thm}
Recall that  $\phi_\Sigma$ is a PVA isomorphism between $K_0(\Sigma)$ and  $\VV_\ZZ$ and $\phi_N$ is a Poisson algebra isomorphism between $K_0(N)$  and $\ZZ[x].$ Let  $P^{j+1}Ind, \bigtriangledown: K_0(\Sigma) \to K_0(\Sigma)$ be the $\ZZ$-linear maps in Theorem \ref{Thm:3.3} and let $Ind: K_0(N)\to K_0(N)$ be the $\ZZ$-linear map in Proposition \ref{Prop:2.15_1107}.
Denote by $\{ [S^\mu]\, _\lambda \, [S^\nu]\}|_{\lambda=0}$ the $0$-th product between $[S^\mu]$ and $[S^\nu]$ in $K_0(\Sigma)$ and denote by $\{ L^\mu\, _\lambda\, L^\nu\}|_{\lambda=0}$ the $0$-th product between $L^\mu=\phi_{\Sigma}([S^\mu])$ and $L^\nu=\phi_{\Sigma}([S^\nu])$ in $\VV_\ZZ$.  Then the following  diagrams are commutative.

 \[ \footnotesize 
  \xymatrix{
& K_0(\Sigma) \ar[rr]^{P^{j+1} Ind} \ar@{.>}'[d][dd]^(.45){Zhu} \ar[ld]_{\phi_\Sigma}&& \makebox[\widthof{$B$}][l] K_0(\Sigma) \ar@{.>}[dd]^-{Zhu} \ar[ld]^{\phi_\Sigma} \\
\VV_\ZZ  \ar[rr]^(.65){\cdot \partial^j L} \ar[dd]_-{Zhu_H} && \VV_\ZZ  \ar[dd]^(.65){Zhu_H} & \\
& K_0(N) \ar[ld]_{\phi_N} \ar'[r][rr]^(0){\delta_{j0} \cdot Ind} && \makebox[\widthof{$B$}][l] K_0(N)
\ar[ld]^{\phi_N} \\ \ZZ[x]  \ar[rr]^{\cdot \delta_{j0} \cdot x} && \ZZ[x]  &
}
 \xymatrix{
& K_0(\Sigma) \ar[rr]^{\bigtriangledown,\ \{[S^\mu]_{\ \lambda}\ \cdot\ \}|_{\lambda=0}} \ar@{.>}'[d][dd]^(.45){Zhu} \ar[ld]_{\phi_\Sigma}&& \makebox[\widthof{$B$}][l] K_0(\Sigma) \ar@{.>}[dd]^-{Zhu} \ar[ld]^{\phi_\Sigma}\\
\VV_\ZZ  \ar[rr]^(.65){ \partial,\ \{L_{ \mu\ \lambda}\  \cdot\ \}|_{\lambda=0}    } \ar[dd]_-{Zhu_H} && \VV_\ZZ  \ar[dd]^(.65){Zhu_H} & \\
& K_0(N) \ar[ld]_{\phi_N} \ar'[r][rr]^(0.3){0 } && \makebox[\widthof{$B$}][l] K_0(N) \ar[ld]^{\phi_N} \\ \ZZ[x]  \ar[rr]^{0 } && \ZZ[x]  &
}
\]
\end{thm}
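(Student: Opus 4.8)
The plan is to verify each cube one face at a time and to reduce everything to a single computation of $Zhu_H$ on the generators of $\VV_\ZZ$. First I would note that four of the six faces of each cube are already available. The top faces are precisely the commuting squares of Theorem~\ref{Thm:3.3}(1): the multiplication square with $j+1$ in place of $j$, and the differential square for $\bigtriangledown$ and $\partial$; here one also uses Theorem~\ref{Thm:3.8_1106}, so that $\phi_\Sigma$ intertwines the $0$-th product $\{[S^\mu]_\lambda\,\cdot\,\}|_{\lambda=0}$ on $K_0(\Sigma)$ with $\{(L^\mu)_\lambda\,\cdot\,\}|_{\lambda=0}$ on $\VV_\ZZ$. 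The bottom faces commute by Proposition~\ref{Prop:2.15_1107}: since $Ind[N_n]=[N_{n+1}]$ and $\phi_N[N_n]=x^n$, the operator $\delta_{j0}Ind$ is transported by $\phi_N$ to multiplication by $\delta_{j0}x$, and the zero map to the zero map. Finally I would \emph{define} the dotted vertical map $Zhu\colon K_0(\Sigma)\to K_0(N)$ as $\phi_N^{-1}\circ Zhu_H\circ\phi_\Sigma$, where $Zhu_H\colon\VV_\ZZ\to\ZZ[x]$ is the Zhu map under the identification $Zhu_H(\VV)\simeq S(R/\partial R)\simeq\CC[x]$ of Proposition~\ref{Prop:zhu} (restricted to integral forms); with this definition the left and right square faces of both cubes commute tautologically.

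The only genuine content is then the commutativity of the two \emph{front} faces, and for this I would compute $Zhu_H$ on the generators $\partial^j L$. Realizing the Zhu algebra as $S(R/\partial R)$ for the Virasoro Lie conformal algebra $R=\CC[\partial]L$ (with central charge specialized to the integer $c$), one has $R/\partial R=\CC\bar L$ with $Zhu_H(L)=\bar L$, while $\partial^j L\in\partial R$ for every $j\ge 1$; hence $Zhu_H(\partial^j L)=\delta_{j0}\bar L$. It follows that multiplication by $\partial^j L$ on $\VV_\ZZ$ descends to multiplication by $\delta_{j0}x$ on $\ZZ[x]$, which is the first front face. For the second cube, every element of $\partial\VV$ lies in the ideal generated by $\partial R$, so $Zhu_H\circ\partial=0$, and the induced Poisson bracket on $S(R/\partial R)$ vanishes because $\{\bar L,\bar L\}=\{L_\lambda L\}|_{\lambda=0}+\partial R=\partial L+\partial R=0$; thus both $\partial$ and the $0$-th product $a_{(0)}b$ descend to $0$. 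Along the way I would confirm integrality, $Zhu_H(\VV_\ZZ)\subseteq\ZZ[x]$: on the basis one gets $Zhu([S^\mu])=[N_m]$ when $\mu=(1^{(m)})$ and $Zhu([S^\mu])=0$ otherwise, so $Zhu$ is a well-defined $\ZZ$-linear map.

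It then remains to read off the back faces by a formal chase. For the first cube, combining the top identity $\phi_\Sigma\circ P^{j+1}Ind=(\cdot\,\partial^jL)\circ\phi_\Sigma$, the front identity $Zhu_H\circ(\cdot\,\partial^jL)=(\cdot\,\delta_{j0}x)\circ Zhu_H$, the bottom identity $(\cdot\,\delta_{j0}x)\circ\phi_N=\phi_N\circ(\delta_{j0}Ind)$, and the definition of $Zhu$, one obtains $Zhu\circ P^{j+1}Ind=(\delta_{j0}Ind)\circ Zhu$; the second cube is the same argument with $\partial$ and $0$ in place of $\cdot\,\partial^jL$ and $\cdot\,\delta_{j0}x$. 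Since $\phi_\Sigma$ and $\phi_N$ are isomorphisms, all remaining faces then commute, establishing the theorem. I expect the main obstacle to be the middle step, namely pinning down $Zhu_H$ on generators and confirming the images $\partial^j L\mapsto\delta_{j0}\bar L$, $\partial\mapsto 0$, and $a_{(0)}b\mapsto 0$. This is delicate precisely because it depends on the conformal-weight bookkeeping of the energy-momentum field $L$ and on using the model $S(R/\partial R)$ of Proposition~\ref{Prop:zhu} (in which $\partial R$, and hence $\partial^j L$ for $j\ge1$, is killed) rather than the raw quotient $\VV/J_{\hbar=1}$; once this generator-level formula is secured, the remaining faces are forced.
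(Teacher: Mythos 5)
Your proposal is correct and follows essentially the same route as the paper's proof: both reduce the cubes to the already-established top and bottom faces plus the generator-level computation $Zhu_H(\partial^n L)=\delta_{n0}x$ from Proposition \ref{Prop:zhu} and the triviality of the induced Poisson bracket on $\ZZ[x]$. Your write-up is simply more explicit about defining the dotted arrow as $\phi_N^{-1}\circ Zhu_H\circ\phi_\Sigma$ and chasing the remaining faces.
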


\begin{proof}
We already showed that the top faces and bottom faces of the diagrams are commutative and  $\CC[x]$ is isomorphic to the $H$-twisted Zhu algebra $Zhu_H(\VV)$ of $\VV$. Since we have  $Zhu_H: \partial^n L \mapsto \delta_{n0} x$ by Proposition \ref{Prop:zhu}, the map $Zhu_H$ induces the $\ZZ$-algebra morphism between $\VV_\ZZ$ and $\ZZ[x]$. Hence it is clear that the left diagram commutes.  Again, by Proposition \ref{Prop:zhu}, the Poisson bracket on $\ZZ[x]$ is induced from the $0$-th product on  $\VV_\ZZ$.  Since the Poisson bracket on  $\ZZ[x]$ is trivial, we get the right commutative diagram.

Also we note that the bottom faces of the diagrams determine the multiplication and the Poisson bracket on $K_0(N)$ as a Zhu algebra of $K_0(\Sigma).$
\end{proof}

\begin{cor}
Let $K_0(\Sigma)_\CC=\CC\otimes_\ZZ K_0(\Sigma)$ and $K_0(N)_\CC=\CC\otimes_\ZZ K_0(N)$. Then $K_0(N)_\CC$ is the Zhu algebra of $K_0(\Sigma)_\CC.$
\end{cor}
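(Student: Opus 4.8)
The plan is to obtain the statement as the $\CC$-linear completion of the preceding theorem, so that no new calculation is required beyond tracking isomorphisms through scalar extension. First I would extend scalars: by Theorem \ref{Thm:3.8_1106} the map $\phi_\Sigma$ is a PVA isomorphism $K_0(\Sigma)\xrightarrow{\sim}\VV_\ZZ$, and tensoring with $\CC$ over $\ZZ$ yields a PVA isomorphism $\phi_\Sigma\otimes\mathrm{id}\colon K_0(\Sigma)_\CC\xrightarrow{\sim}\VV$ onto the full Virasoro-Magri PVA with integral central charge $c$. In the same way $\phi_N$ extends to a Poisson algebra isomorphism $K_0(N)_\CC\xrightarrow{\sim}\CC[x]$ carrying the trivial Poisson bracket.

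Next I would transport the Hamiltonian structure. The energy-momentum field $L\in\VV$ supplies the Hamiltonian operator $H=L_0$, so $\VV$ carries a well-defined Zhu algebra $Zhu_H(\VV)$. Pulling $H$ back along $\phi_\Sigma\otimes\mathrm{id}$ equips $K_0(\Sigma)_\CC$ with a Hamiltonian operator, and because the Zhu construction $\VV_\hbar/J_\hbar$ is built solely from the $\lambda$-bracket, the product, and the conformal weights, it is preserved by any PVA isomorphism commuting with $H$. Hence $Zhu_H\bigl(K_0(\Sigma)_\CC\bigr)\cong Zhu_H(\VV)$ as Poisson algebras.

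I would then identify $Zhu_H(\VV)$ explicitly. Since $\VV=S(R)$ for the Virasoro Lie conformal algebra $R=\CC[\partial]\otimes L\oplus\CC C$, Proposition \ref{Prop:zhu} gives $Zhu_H(\VV)\cong S(R/\partial R)\cong\CC[L]\cong\CC[x]$ as Poisson algebras; the induced bracket is trivial because $\{L_\lambda L\}|_{\lambda=0}=\partial L$ lies in $\partial R$ and hence vanishes in $R/\partial R$. This identification matches $K_0(N)_\CC\cong\CC[x]$ with trivial bracket, and the commuting cube of the preceding theorem already records that the Zhu map on $K_0(\Sigma)_\CC$ is compatible with $\phi_\Sigma$ and $\phi_N$, acting on generators by $\partial^n L\mapsto\delta_{n0}\,x$. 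Composing the four isomorphisms above then yields a Poisson algebra isomorphism $Zhu_H\bigl(K_0(\Sigma)_\CC\bigr)\cong K_0(N)_\CC$, which is exactly the assertion that $K_0(N)_\CC$ is the Zhu algebra of $K_0(\Sigma)_\CC$.

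The main obstacle is not any single computation but the verification that scalar extension is compatible with the whole package of structures at once: one must check that $\VV_\hbar$, the submodule $J_\hbar$, and its specialization $J_{\hbar=1}$ behave well under $-\otimes_\ZZ\CC$, and that the Hamiltonian operator together with its eigenvalue decomposition survives the pullback along $\phi_\Sigma$. Granting this bookkeeping, the corollary is immediate, since the substantive content --- the commutativity of the Zhu squares and the triviality of the Poisson bracket --- was already established in the preceding theorem and Proposition \ref{Prop:zhu}.
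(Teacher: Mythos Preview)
Your proposal is correct and matches the paper's approach: the corollary is stated without proof precisely because it is the $\CC$-linear extension of the preceding theorem, and your write-up simply makes explicit the scalar-extension bookkeeping that the paper leaves implicit. The key inputs you cite---Theorem \ref{Thm:3.8_1106}, Proposition \ref{Prop:zhu}, and the commuting Zhu squares---are exactly those the paper has assembled immediately before the corollary.
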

\vskip 2mm

\subsection{Differential algebra structure on $K_0(N)$}

The polynomial algebra $\ZZ[x]$ is an differential algebra with the differential $\frac{d}{dx}$.  As a differential algebra
\[ \psi: (\ZZ[x], \frac{d}{dx}) \simeq (\ZZ[L, \partial L, \partial^2 L, \cdots]/(\partial L-1), \partial), \quad x \mapsto L, \]
where $(\partial L-1)$ is the differential algebra ideal generated by $\partial L-1.$ Since $\partial(\partial L-1)=\partial^2L$, we have $\partial^n L=0 $ for $n\geq 2.$  By the composition  of the quotient map $\VV_\ZZ \to  \VV_\ZZ/(\partial L-1)$ and the isomorphism $\psi^{-1}$, we get a surjective algebra homomorphism
$ q:\VV_\ZZ \to \ZZ[x].$ Note that the map $q$ maps $L$ to $x$, $\partial L$ to $1$ and $\partial^n L$ to $0$ for $n\geq 2$. Hence, for $A\in \VV$,
\begin{equation*}
\begin{aligned}
q(\partial^j L \cdot A) = \left\{
\begin{array}{ll}
x \cdot q(A) \qquad \qquad & \text{ if } j=0, \\
q(A) & \text{ if } j=1, \\
0 & \text{ otherwise.}
\end{array}
\right.
\end{aligned}
\end{equation*}
As a consequence, we get the following commutative diagram.

\begin{equation} \label{diagram3}
  \xymatrix{
& K_0(\Sigma) \ar[rr]^{P^{j+1} Ind} \ar@{.>}'[d][dd]^(.45){ } \ar[ld]_{\phi_\Sigma}&& \makebox[\widthof{$B$}][l] K_0(\Sigma) \ar@{.>}[dd]^-{ } \ar[ld]^{\phi_\Sigma}&  \\
\VV_\ZZ  \ar[rr]^(.65){\cdot \partial^j L} \ar@{->>}[dd]_{q} && \VV_\ZZ \ar@{->>}[dd]^(0.65){q} & \\
& K_0(N) \ar[ld]_{\phi_N} \ar'[r][rr]^(0){\delta_{j0} \cdot Ind+\delta_{j1}\cdot id } && \makebox[\widthof{$B$}][l] K_0(N) \ar[ld]^{\phi_N} \\ \ZZ[x]  \ar[rr]^{\cdot \delta_{j0} \cdot x +\delta_{j1} \cdot id } && \ZZ[x]  &
}
\end{equation}

Moreover, since $\partial(L)=1 \ \text{mod}(\partial L-1)$ and $\frac{d}{dx} x=1$ in $\ZZ[x]$ and since both $\partial$ and $\frac{d}{dx}$ satisfy the Leibniz rule, we have the following commutative diagram.
\begin{equation}\label{diagram4}
 \xymatrix{
& K_0(\Sigma) \ar[rr]^{\bigtriangledown} \ar@{.>}'[d][dd]^(.45){ } \ar[ld]_{\phi_\Sigma}&& \makebox[\widthof{$B$}][l] K_0(\Sigma) \ar@{.>}[dd]^-{ } \ar[ld]^{\phi_\Sigma}\\
\VV_\ZZ  \ar[rr]^(.65){ \partial  } \ar@{->>}[dd]_{q } && \VV_\ZZ  \ar@{->>}[dd]^(.65){q } & \\
& K_0(N) \ar[ld]_{\phi_N} \ar'[r][rr]^(0.3){Res } && \makebox[\widthof{$B$}][l] K_0(N) \ar[ld]^{\phi_N} \\ \ZZ[x]  \ar[rr]^{\frac{d}{dx} } && \ZZ[x]  &
}
\end{equation}

By the diagrams  (\ref{diagram3}) and (\ref{diagram4}), we conclude that $K_0(\Sigma)$ induces a differential algebra structure on $K_0(N)$ by the map $\phi_N^{-1} \circ q \circ \phi_\Sigma.$ We remark that this map relates Theorem \ref{Thm:3.3} and Proposition \ref{Prop:2.15_1107} (2).

\subsection{ Quatization of  $K_0(\Sigma)$ via  $K_0(N)$  at $c=0$}
In this section, we assume the central charge $c=0.$
Recall that the state-field correspondence of universal Virasoro vertex algebra $(V(Vir), \mathcal{F})$ is
\begin{equation}\label{Eqn:sf_4}
 s: V(Vir) \to \mathcal{F}, \qquad L_{j_1-2}\cdots L_{-j_s-2}\vac \mapsto :\partial_z^{(j_1)}L(z) \cdots \partial_z^{(j_s)}L(z):.
 \end{equation}
If we take $[L_\lambda L]=(\partial+2\lambda)L$, then  $L_{n}$ and $\vac$ can be identified with the map $x^{-n+1}\frac{d}{dx}$ and the identity map on $\CC[x]$ (See Section \ref{Sec:PVA}).
Thus the map (\ref{Eqn:sf_4}) can be equivalently written as
\[ L_{-j_1-2}\cdots L_{-j_s-2}\vac= :\partial^{(j_1)}L \cdots \partial^{(j_s)}L:\]
and the process of quasi-classical limit induces multiplications of a PVA from normally ordered products of a vertex algebra, i.e., the quasi-claissical limit of $L_{-j_1-2}\cdots L_{-j_s-2}\vac\in V(Vir)$ is $\partial^{(j_1)}L \cdots \partial^{(j_s)}L\in \VV .$  Hence the quantization of each element of $K_0(\Sigma)_\CC$ can be realized as an endomorphism of $K_0(N)_\CC.$ As a consequence, we get the following theorem.

\begin{thm}
Let $\left< Ind, Res \right>$ be the subspace of endomorphisms on $K_0(N)$ generated by $Ind$, $Res$ and their compositions and let $W_\ZZ$ be the $\ZZ$-subalgebra of the Weyl algebra generated by $x$ and $\frac{d}{dx}$. Then the following diagram is commutative,
\begin{equation}
\xymatrix{
K_0(\Sigma) \ar[d]_{\psi_1} \ar[r]^{\phi_\Sigma} &\VV_\ZZ \ar[d]^{\psi_2}\\
\left<Ind, Res \right> \ar[r]^(0.65)i &W_\ZZ}
\end{equation}
where
\begin{equation}
\begin{aligned}
&\psi_1: K_0(\Sigma)_\CC \to  \left<Ind, Res \right>,  \quad \prod_{k=1}^s [S^\mu] \mapsto \prod_{k=1}^s j_k! \, Ind^{j_k+3}  Res, \\
&\psi_2: \VV \to W,  \qquad \prod_{k=1}^s \partial^{j_k}L \mapsto \prod_{k=1}^s j_k! \,  x^{j_k+3}\frac{d}{dx} ,\\
& i (Ind)= x, \qquad i(Res)= \frac{d}{dx},
\end{aligned}
\end{equation}
for $\mu= \phi_\Sigma^{-1}(\prod_{k=1}^s \partial^{j_k}L).$ Hence $\psi_1$ and $\psi_2$ give a realization of  quantizations of $K_0(\Sigma)$ and $\VV_\ZZ,$ respectively.
\end{thm}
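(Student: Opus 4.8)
The plan is to evaluate both composites of the square on the distinguished $\ZZ$-basis $\{[S^\mu]\}$ of $K_0(\Sigma)$ and check that they agree, after first pinning down the three auxiliary maps $i$, $\psi_2$, and $\psi_1$ individually.

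First I would make the bottom edge $i$ precise. By Proposition \ref{Prop:2.15_1107}, the isomorphism $\phi_N\colon K_0(N)\to \ZZ[x]$ intertwines $Ind$ with multiplication by $x$ and $Res$ with $\tfrac{d}{dx}$, and these two operators satisfy the Weyl relation $Res\circ Ind-Ind\circ Res=Id$ coming from $Res\circ Ind\simeq Ind\circ Res\oplus Id$. Hence conjugation by $\phi_N$, namely $T\mapsto \phi_N\circ T\circ\phi_N^{-1}$, restricts to an algebra isomorphism
\[ i\colon \langle Ind,Res\rangle \xrightarrow{\ \sim\ } W_\ZZ,\qquad i(Ind)=x,\quad i(Res)=\tfrac{d}{dx}, \]
whose image is exactly $W_\ZZ$ precisely because of the Weyl relation. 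In particular $i$ respects composition, so $i\big(\prod_k Ind^{j_k+3}Res\big)=\prod_k x^{j_k+3}\tfrac{d}{dx}$; this is the only step using Proposition \ref{Prop:2.15_1107}(2), and it is what guarantees $i$ is well defined as a homomorphism.

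Next I would build $\psi_2$ from the state--field correspondence at $c=0$. By Remark \ref{Vir} the Witt-algebra realization sends $L_m\mapsto x^{-m+1}\tfrac{d}{dx}$ and $\vac\mapsto \mathrm{id}_{\CC[x]}$, so $L_{-j-2}\vac\mapsto x^{j+3}\tfrac{d}{dx}$. Using $\partial^j L=j!\,L_{(-j-1)}\vac=j!\,L_{-j-2}\vac$ (from \eqref{eq:correspondence} together with the convention $L_n=L_{(n+1)}$) this fixes $\psi_2$ on generators:
\[ \psi_2(\partial^j L)=j!\,x^{j+3}\tfrac{d}{dx}. \]
For a monomial, \eqref{Eqn:sf_4} expresses $\prod_k \partial^{(j_k)}L$ as the quasi-classical limit of the normally ordered state $L_{-j_1-2}\cdots L_{-j_s-2}\vac$, whose Witt realization is the ordered operator composition $\prod_k x^{j_k+3}\tfrac{d}{dx}$; restoring the factorials yields $\psi_2\big(\prod_k\partial^{j_k}L\big)=\prod_k j_k!\,x^{j_k+3}\tfrac{d}{dx}\in W_\ZZ$. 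I would then \emph{define} $\psi_1:=i^{-1}\circ\psi_2\circ\phi_\Sigma$; since $i(Ind^{j_k+3}Res)=x^{j_k+3}\tfrac{d}{dx}$, this reproduces the stated formula $\psi_1([S^\mu])=\prod_k j_k!\,Ind^{j_k+3}Res$ for $\mu=\phi_\Sigma^{-1}(\prod_k\partial^{j_k}L)$.

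With the maps in hand, commutativity of the square is immediate on each basis element $[S^\mu]$: by Theorem \ref{Thm:3.8_1106} we have $\phi_\Sigma([S^\mu])=\prod_k\partial^{j_k}L$, so both $\psi_2\circ\phi_\Sigma([S^\mu])$ and $i\circ\psi_1([S^\mu])$ equal $\prod_k j_k!\,x^{j_k+3}\tfrac{d}{dx}$, and extending $\ZZ$-linearly gives $i\circ\psi_1=\psi_2\circ\phi_\Sigma$. The main obstacle I anticipate is not this final bookkeeping but the \emph{well-definedness} of $\psi_2$ (equivalently $\psi_1$): the source $\VV_\ZZ$ is commutative while the target operators do not commute, since $[x^{a}\tfrac{d}{dx},x^{b}\tfrac{d}{dx}]=(b-a)\,x^{a+b-1}\tfrac{d}{dx}$, so assigning a single operator to the symmetric monomial $\prod_k\partial^{j_k}L$ forces a choice of ordering. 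That choice is exactly the normal-ordering prescription of \eqref{Eqn:sf_4}, and it is precisely the \emph{quantization}: $\psi_2$ and $\psi_1$ are sections of the quasi-classical limit rather than algebra homomorphisms. I would therefore state them with the fixed ordering convention inherited from the partition $\mu$, and emphasize that the content of the theorem is the compatibility of this quantized realization with $\phi_\Sigma$ and $i$, not order-independence.
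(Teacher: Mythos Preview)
Your proposal is correct and follows the same route the paper takes: the paper does not give a separate proof environment for this theorem but treats it as an immediate consequence of the discussion just before it (the state--field correspondence \eqref{Eqn:sf_4}, the Witt realization $L_m\leftrightarrow x^{-m+1}\tfrac{d}{dx}$ from Remark~\ref{Vir}, and Proposition~\ref{Prop:2.15_1107} for the identification of $Ind,Res$ with $x,\tfrac{d}{dx}$), which is exactly the content you unpack. Your explicit construction of $i$ by conjugation with $\phi_N$, your definition $\psi_1:=i^{-1}\circ\psi_2\circ\phi_\Sigma$, and especially your remark that $\psi_2$ is only a quantization section (requiring a fixed ordering) rather than an algebra map, make precise points the paper leaves implicit.
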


\vskip 10mm

\end{document}